\theoremstyle{plain}
\newtheorem{thm}{Theorem}[section]
\newtheorem{prop}[thm]{Proposition}
\newtheorem{lemma}[thm]{Lemma}
\newtheorem*{clm*}{Claim}
\theoremstyle{definition}
\numberwithin{equation}{section}
\let\eps=\epsilon
\let\theta=\vartheta
\let\rho=\varrho
\let\phi=\varphi
\def\cA{{\mathcal A}}
\def\cF{{\mathcal F}}
\def\cC{{\mathcal C}}
\def\cP{{\mathcal P}}
\def\cQ{{\mathcal Q}}
\def\cS{{\mathcal S}}
\def\var{\mathop{\text{\rm Var}}\nolimits}
\def\be{\mathop{\text{\rm Be}}\nolimits}
\let\polishlcross=\l
\def\l{\ifmmode\ell\else\polishlcross\fi}
\def\F{\mathcal{F}}
\def\aas{\emph{a.a.s.}}
\theoremstyle{plain}
\newtheoremstyle{note}
  {4pt}
  {4pt}
  {\sl}
  {}
  {\itshape}
  {.}
  {.5em}
  {}
\theoremstyle{note}
\begin{document}
\title[Hamiltonicity in randomly perturbed hypergraphs]{Hamiltonicity in randomly perturbed hypergraphs}
\thanks{
JH was partially supported by FAPESP (Proc. 2014/18641-5) and Simons Foundation \#630884.
YZ is partially supported by NSF grant DMS 1700622.}

\author{Jie Han}
\author{Yi Zhao}

\address{Department of Mathematics, University of Rhode Island, 5 Lippitt Road, Kingston, RI, 02881}
\email{jie\_han@uri.edu}

\address
{Department of Mathematics and Statistics, Georgia State University, Atlanta, GA, 30303}
\email{yzhao6@gsu.edu}

\keywords{Hamiltonian cycle, random hypergraph, perturbed hypergraph}
\subjclass[2010]{
}
\begin{abstract}
For integers $k\ge 3$ and $1\le \ell\le k-1$, we prove that for any $\alpha>0$, there exist $\epsilon>0$ and $C>0$ such that for sufficiently large $n\in (k-\ell)\mathbb{N}$, the union of a $k$-uniform hypergraph with minimum vertex degree $\alpha n^{k-1}$ and a binomial random $k$-uniform hypergraph $\mathbb{G}^{(k)}(n,p)$ with $p\ge n^{-(k-\ell)-\epsilon}$ for $\ell\ge 2$ and $p\ge C n^{-(k-1)}$ for $\ell=1$ on the same vertex set contains a Hamiltonian $\ell$-cycle with high probability.
Our result is best possible up to the values of $\epsilon$ and $C$ and answers a question of Krivelevich, Kwan and Sudakov.
\end{abstract} 

\maketitle


\section{Introduction}  \label{sec:intro}

\subsection{Hamiltonian cycles and random graphs}
The study of Hamiltonicity (the existence of a spanning cycle) has been a central and fruitful area in graph theory.
In particular, a celebrated result of Karp~\cite{Karp} states that the decision problem for Hamiltonicity in graphs is NP-complete.
So it is desirable to study sufficient conditions that guarantees Hamiltonicity.
Among a large variety of such results, probably the most well-known is a theorem of Dirac from 1952~\cite{Di52}: every $n$-vertex graph ($n\ge 3$) with minimum degree at least $n/2$ is Hamiltonian.

Another well-studied object in graph theory is the random graph $\mathbb{G}(n,p)$, which contains $n$ vertices and each pair of vertices forms an edge with probability $p$ independently from other pairs.
P\'osa~\cite{Posa} and Korshunov~\cite{Korshunov} independently determined the threshold for Hamiltonicity in $\mathbb{G}(n,p)$, which is around $\log n/n$.
This implies that almost all dense graphs are Hamiltonian. 
Furthermore, Bohman, Frieze and Martin~\cite{BFM} showed that for every $\alpha>0$ there is $c=c(\alpha)$ such that 
every $n$-vertex graph $G$ with minimum degree $\alpha n$ becomes Hamiltonian~\aas~after adding $c n$ random edges (we say that an event happens \emph{asymptotically almost surely}, or \aas, if it happens with probability $1-o(1)$).
This result is tight up to the value of $c$ by considering a complete bipartite graph $K_{\alpha n, (1-\alpha)n}$.
A comparison can be drawn to the notion of smoothed analysis of algorithms introduced by Spielman and Teng~\cite{SpTe}, which involves studying the performance of algorithms on randomly perturbed inputs.

\subsection{Uniform hypergraphs}
It is natural to study the Hamiltonicity of uniform hypergraphs.
Given $k\ge 2$, a $k$-uniform hypergraph (in short, a \emph{$k$-graph}) $H=(V,E)$ consists of a vertex set $V$ and an edge set $E\subseteq \binom{V}{k}$, where every edge is a $k$-element subset of $V$. 
Given a $k$-graph $H$ with a set $S$ of $d$ vertices (where $1 \le d \le k-1$) we define $N_{H} (S)$ to be the collection of $(k-d)$-sets $T$ such that $S\cup T\in E(H)$, and let $\deg_H(S):=|N_H(S)|$. 
The \emph{minimum $d$-degree $\delta _{d} (H)$} of $H$ is the minimum of $\deg_{H} (S)$ over all $d$-vertex sets $S$ in $H$.

In the last two decades, there has been a growing interest of extending Dirac's theorem to  hypergraphs.
Despite other notion of cycles in hypergraphs (e.g., Berge cycles), the following definition of cycles has become more popular recently (see surveys~\cite{RR,zsurvey}). 
For integers $1\le \ell \le k-1$ and $m\ge 3$, a $k$-graph $F$ with $m(k-\ell)$ vertices and $m$ edges is called an \emph{$\ell$-cycle} if its vertices can be ordered cyclically such that each of its edges consists of $k$ consecutive vertices and every two consecutive edges (in the natural order of the edges) share exactly $\ell$ vertices. 
A $k$-graph is called \emph{$\ell$-Hamiltonian} if it contains an $\ell$-cycle as a spanning subgraph.
Extending Dirac's theorem, the minimum $d$-degree conditions that force $\ell$-Hamiltonicity (for $1\le d, \ell\le k-1$) have been intensively studied~\cite{BMSSS1, BMSSS2, BHS, CzMo, GPW, HS, HZ2, HZ1, KKMO, KMO, KO, RoRu14, RoRuSz06, RRS08, RRS11}.
For example, the minimum $1$-degree threshold for $2$-Hamiltonicity in 3-graphs was determined asymptotically ~\cite{RRRSS}.

Let $\mathbb{G}^{(k)}(n,p)$ denote the binomial random $k$-graph on $n$ vertices, where each $k$-set forms an edge independently with probability $p$.
The thresholds for $\ell$-Hamiltonicity have been studied by Dudek and Frieze~\cite{DuFr1, DuFr2}, who proved that the asymptotic threshold is $1/n^{k-\ell}$ for $\ell\ge 2$ and $\log n/n^{k-1}$ for $\ell=1$ (they also gave a sharp threshold for $k\ge 4$ and $\ell=k-1$).

It is also natural to consider $\ell$-Hamiltonicity in randomly perturbed $k$-graphs.
In fact, Krivelevich, Kwan and Sudakov~\cite{KKS} extended the result of Bohman--Frieze--Martin~\cite{BFM} to hypergraphs.

\begin{thm}\label{thm:KKS}\cite{KKS}
Let $k\in \mathbb{N}$, and let $H$ be a $k$-graph on $n\in (k-1)\mathbb{N}$ vertices with $\delta_{k-1}(H)\geq \alpha n$. There exists a function $c_k =c_k (\alpha)$ such that for $p=c_k/ n^{k-1}$, $H\cup \mathbb G^{(k)}(n,p)$ a.a.s.~is $1$-Hamiltonian.
\end{thm}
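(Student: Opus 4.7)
The plan is to apply the \emph{absorption method}. The proof has three main phases: (1) set aside a short absorbing $1$-path $P_A$ in $H$ such that any small set of ``leftover'' vertices can be absorbed into it; (2) use the minimum $(k-1)$-degree condition on $H$ to cover almost all remaining vertices by a bounded collection of long $1$-paths; (3) connect these paths into a single Hamilton $1$-cycle using the sparse random edges of $\mathbb{G}^{(k)}(n,p)$ at the junctions, and invoke the absorber to swallow the final leftover.

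For the absorbing path, I would show that every vertex $v \in V(H)$ admits many short ``absorber gadgets'' in $H$ — short $1$-paths that can be rewired to include $v$ while preserving their two endpoints. The count follows from iterated applications of $\delta_{k-1}(H) \geq \alpha n$. Concatenating a random sub-sample of these gadgets (using the min-degree condition on $H$ to link consecutive gadgets) yields an absorbing path $P_A$ of length $o(n)$ which absorbs any $U \subseteq V \setminus V(P_A)$ with $|U| \leq \mu n$ for a suitable $\mu = \mu(\alpha,k) > 0$.

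Next, set aside a small reservoir $R \subseteq V \setminus V(P_A)$ so that every $(k-1)$-set has $\Omega(|R|)$ neighbors in $R$ under $H$. Then greedily extend long $1$-paths in $H$ to cover $V \setminus (V(P_A) \cup R)$, leaving at most $o(n)$ uncovered vertices. To link consecutive paths (and $P_A$) into one cycle, extend the endpoint $(k-1)$-tuples into $R$ via $H$-edges until they can be joined through a random edge from $\mathbb{G}^{(k)}(n,p)$; since $\mathbb{G}$ has $\Theta(n)$ edges and only a bounded number of connections are required, the necessary bridges exist \aas. Finally, apply the absorber to the unused vertices to close the cycle.

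The main obstacle is the \emph{near-extremal} behavior of $H$. For instance, if $H$ consists essentially of the edges touching a hub set $A$ of size $\approx \alpha n$, then any Hamilton $1$-cycle must contain $\Omega(n)$ edges disjoint from $A$, so nearly all of the $\Theta(n)$ random edges must be incorporated into the cycle. Arranging that sparse random edges can be scheduled wherever they are needed — and meshing the greedy cover with the random connections in this regime — is the key technical challenge and is what dictates how large $c_k(\alpha)$ must be chosen.
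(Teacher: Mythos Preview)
This theorem is quoted from \cite{KKS}; the present paper does not re-prove it but instead proves the stronger Theorem~\ref{main}, whose $\ell=1$ case subsumes it under the weaker hypothesis $\delta_1(H)\ge\alpha n^{k-1}$. So the relevant comparison is with the paper's proof of Theorem~\ref{main}.

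Your absorption framework is correct, but the division of labor between $H$ and $\mathbb G^{(k)}(n,p)$ is inverted, and this is a genuine gap rather than a technicality. You propose to cover almost all of $V$ by a \emph{bounded} number of long $1$-paths in $H$ and to use random edges only at the boundedly many junctions. Your own final paragraph shows why this cannot work. In the hub example (all edges of $H$ meet a set $A$ of size $\alpha n$), every edge of an $H$-path contains a vertex of $A$, and since the paths are vertex-disjoint and each vertex lies in at most two edges of a $1$-path, all your $H$-paths together contain at most $2|A|=2\alpha n$ edges, hence cover at most about $2\alpha(k-1)n$ vertices. For small $\alpha$ this is nowhere near $n$, so Step~(2) fails outright; equivalently, a Hamilton $1$-cycle has $n/(k-1)$ edges but at most $2\alpha n$ of them can meet $A$, so $\Omega(n)$ edges of the cycle must come from $\mathbb G^{(k)}(n,p)$, not $O(1)$.

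The paper resolves this by reversing the roles. The path-cover step (Lemma~\ref{lm:almost}) builds the long paths \emph{out of random edges}, requiring of $H$ only that the $\ell$-ends land in a prescribed dense $\ell$-graph $L$; this is where $\Theta(n)$ random edges are spent and is exactly why $p=\Theta(n^{-(k-1)})$ is needed. The deterministic graph $H$ is used (together with further random edges) for the absorbers (Lemma~\ref{lem:new}) and the connectors (Lemma~\ref{lm:conn}). With this allocation the near-extremal case disappears: the random paths traverse $V\setminus A$ freely, and the $H$-structure is invoked only at endpoints and in the absorbing gadgets. Your outline would become correct if you swap the roles in Steps~(2) and~(3): cover with $O(n)$-edge random paths whose ends are controlled by $H$, and connect/absorb via $H$.
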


Theorem~\ref{thm:KKS} is tight up to the value of $c_k$ (see the paragraph after Theorem~\ref{main}).
Similar results for the powers of Hamiltonian $(k-1)$-cycles were obtained by Bennett, Dudek and Frieze~\cite{BeDuFr16}, and recently by Bedenknecht, Han, Kohayakawa and Mota~\cite{BHKM}.
In addition, B\"ottcher, Montgomery, Parczyk and Person~\cite{BMPP} proved embedding results for bounded degree subgraphs in randomly perturbed graphs.
Other results in randomly perturbed graphs can be found in~\cite{BTW, KKS2, BHKMPP}.

Krivelevich, Kwan and Sudakov~\cite{KKS} asked whether Theorem~\ref{thm:KKS} can be extended to $\ell$-Hamiltonicity under minimum $d$-degree conditions for $1\le d, \ell \le k-1$. McDowell and Mycroft~\cite{McMy} found such results for $d\ge \max\{\ell, k-\ell\}$ and reiterated the question for arbitrary $d$ and $\ell$.
In this paper we solve this problem completely.
Since the minimum $1$-degree condition is the weakest among $d$-degree conditions for all $d\ge 1$, we only state and prove our result with respect to the minimum $1$-degree.

\begin{thm}\label{main}
For integers $k\ge 3$, $1\le \ell \le k-1$ and $\alpha>0$ there exist $\epsilon>0$ and an integer $C>0$ such that the following holds for sufficiently large $n\in (k-\ell)\mathbb{N}$.
Suppose $H$ is a $k$-graph on $n$ vertices with $\delta_{1}(H)\ge  \alpha {n}^{k-1}$
and
\begin{equation}\label{eq:p}
p=p(n)\ge \begin{cases}
n^{-(k-\ell)-\epsilon} &\text{ if } \ell\ge 2, \\
C n^{-(k-1)}  &\text{ if } \ell=1.
\end{cases}
\end{equation}
Then $H\cup \mathbb{G}^{(k)}(n,p)$ a.a.s.~is $\ell$-Hamiltonian.
\end{thm}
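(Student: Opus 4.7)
Write $\mathbb{G}=\mathbb{G}^{(k)}(n,p)$. We follow the absorption framework adapted to the randomly perturbed setting: we build a Hamiltonian $\ell$-cycle in $H\cup \mathbb{G}$ in four stages. Namely, (i) a short absorbing $\ell$-path $A$ in $H$ with the property that for every small set $U\subseteq V\setminus V(A)$ with $(k-\ell)\mid |U|$ the $k$-graph $H[V(A)\cup U]$ contains an $\ell$-path with the same endpoints as $A$ and vertex set $V(A)\cup U$; (ii) a small reservoir $R\subseteq V$ of ``connector'' vertices, chosen at random, satisfying strong regularity with respect both to $H$ and to $\mathbb{G}$, so that any two loose ends of an $\ell$-path may be joined through $R$; (iii) a family of $O(1)$ vertex-disjoint $\ell$-paths in $H\cup \mathbb{G}$ covering $V\setminus (V(A)\cup R)$; and (iv) a closing-up stage in which these paths and $A$ are glued cyclically through $R$ and the absorbing property of $A$ swallows the remaining uncovered reservoir vertices.

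\textbf{Absorber and reservoir.} For Step (i), the hypothesis $\delta_1(H)\ge \alpha n^{k-1}$ yields, by iterated supersaturation, polynomially many short absorber $\ell$-paths in $H$ per target vertex; a standard probabilistic selection (sample candidate absorbers independently, then delete overlaps, then append them along a template path) then produces the desired $A\subseteq H$. For Step (ii) we take $R$ to be a small uniformly random subset of $V$; every vertex has many $H$-neighbours meeting $R$ in the correct pattern and $\mathbb{G}[R]$ carries enough random edges to realise any required short $\ell$-path connection, both \aas\ by standard concentration.

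\textbf{Almost-spanning cover.} The crux of the proof is Step (iii). For $\ell=1$ and $p\ge Cn^{-(k-1)}$ we follow the strategy underlying Theorem~\ref{thm:KKS} but starting from the weaker vertex-degree hypothesis $\delta_1(H)\ge \alpha n^{k-1}$: the $1$-degree condition supplies an almost-perfect star/fan structure inside $H$, and linearly many random edges of $\mathbb{G}$ stitch these stars into a long loose path. For $\ell\ge 2$ and $p\ge n^{-(k-\ell)-\epsilon}$ we apply the weak hypergraph regularity lemma to $H$, then use $\delta_1(H)\ge \alpha n^{k-1}$ to extract a near-perfect fractional matching on the reduced $k$-graph whose support consists of $\epsilon$-regular $k$-tuples of clusters of density bounded below; within each such $k$-tuple we build a long $\ell$-path using only edges of $H$ (a purely dense sub-problem solvable by greedy/extension arguments based on the regularity), while a pre-sampled sparse slice of $\mathbb{G}$ provides the few edges needed to transition between consecutive $k$-tuples.

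\textbf{Closing up and main obstacle.} In Step (iv) the $O(1)$ $\ell$-paths from Step (iii) and the absorber $A$ are concatenated cyclically through reservoir connections, leaving only an uncovered set $U\subseteq R$ with $|U|\le \beta n$ and $(k-\ell)\mid |U|$; applying the absorbing property of $A$ produces a Hamiltonian $\ell$-cycle. The technical heart lies in Step (iii) for $\ell\ge 2$: because $p=n^{-(k-\ell)-\epsilon}$ is strictly below the $\ell$-Hamiltonicity threshold in $\mathbb{G}$ alone, every long $\ell$-path structure must come from $H$ while $\mathbb{G}$ is used only to bridge cluster transitions. Controlling these transitions — ensuring that \aas\ each regular $k$-tuple in the reduced partition is equipped with enough well-placed random edges to guarantee a legal transition to the next one — is what dictates the admissible value of $\epsilon$ and is the main obstacle in the argument.
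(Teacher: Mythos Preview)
Your absorption framework is the same as the paper's, but you have the division of labour between $H$ and $\mathbb{G}$ essentially inverted, and this produces a concrete failure in your reservoir/connection step. You claim that ``$\mathbb{G}[R]$ carries enough random edges to realise any required short $\ell$-path connection''. At $p=n^{-(k-\ell)-\epsilon}$ this is false: to connect two \emph{prescribed} $\ell$-ends by an $\ell$-path of length $a$ with interior in a reservoir $R$ of size $\gamma n$, the expected number of candidates in $\mathbb{G}$ is at most $(\gamma n)^{a(k-\ell)-\ell}p^{a}=\gamma^{a(k-\ell)-\ell}n^{-\ell-a\epsilon}\to 0$, so \aas\ no such connection exists for any bounded $a$. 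The paper's connecting lemma (Lemma~\ref{lm:conn}) gets around this by first extending each prescribed end deterministically inside $H$---which is only possible after the ``shaving'' step (Lemma~\ref{lem:shave}) has guaranteed that every $\ell$-set in $\partial_{\ell}H'$ has degree at least $\eta^{2}n^{k-\ell}$---and then using random edges to bridge a segment whose \emph{both} ends are free to vary; the expected count is then $\Theta(n^{\ell-a\epsilon})\to\infty$. The shaving step, which you do not have, is what makes the $\delta_{1}$ hypothesis usable for connecting and absorbing; note it also creates a small set $V_{0}$ of ``bad'' vertices that cannot be absorbed and must instead be covered explicitly in the path-cover step.

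The same inversion occurs in your Step~(iii). You write that ``every long $\ell$-path structure must come from $H$'' because $p$ is below the $\ell$-Hamiltonicity threshold of $\mathbb{G}$; but $p\ge n^{-(k-\ell)-\epsilon}$ is comfortably above the threshold for \emph{containing} $\ell$-paths of any fixed length $a<\ell/\epsilon$ (indeed $\Phi_{P_{a}}\ge n^{\ell-a\epsilon}$). The paper does the opposite of what you propose: Lemma~\ref{lm:almost} builds the long covering paths entirely in $\mathbb{G}$ via Janson's inequality, using $H$ only to place the $\ell$-ends in $\partial_{\ell}H_{*}$ so that the connecting lemma applies afterwards. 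No regularity lemma is used anywhere. Likewise, your claim that absorbers for an arbitrary $(k-\ell)$-set can be found purely in $H$ by ``iterated supersaturation'' is not justified from $\delta_{1}(H)\ge\alpha n^{k-1}$: without control on $\ell$-degrees you cannot thread the pieces of an absorber together inside $H$. In the paper's absorbing lemma (Lemma~\ref{lem:new}) the edges of $H'$ supply only the local segments of the absorbing path $Q'$ around each $w_{i}$ (again relying on the shaved $\ell$-degree), while the random graph supplies the edges making the non-absorbing path $Q$ an $\ell$-path.
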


Theorem~\ref{main} is sharp up to the constants $\epsilon$ and $C$.
Indeed, given $k$ and $\ell$, let $\alpha>0$ be sufficiently small and $n\in (k-\ell)\mathbb{N}$ be sufficiently large. 
Consider a partition $A\cup B$ of a vertex set $V$ such that $|A|=\alpha n$ and $|B|=(1-\alpha)n$.
Let $H_0$ be the $k$-graph with all $k$-tuples that intersect both $A$ and $B$ as edges.
It is easy to see that $\delta_1(H_0) = \alpha n \binom{n - \alpha n-1}{k-2}$.
Suppose $H_0\cup \mathbb{G}^{(k)}(n,p)$ \aas~contains a Hamiltonian $\ell$-cycle $C$.
Since $|A|=\alpha n$, $C$ contains at least $1/\alpha-1$ consecutive vertices in $B$.
Let $a = \lfloor (1/\alpha - 1 - \ell) / (k - \ell) \rfloor$.
Since $B$ is an independent set in $H_0$, this implies that $\mathbb{G}^{(k)}(n,p)$ \aas~contains an \emph{$\ell$-path} on $a$ edges (a $k$-graph with vertices $v_1, v_2, \dots, v_{a(k-\ell)+\ell}$ and edges $\{v_{i(k-\ell)+1}, \dots, v_{i(k-\ell)+k}\}$ for $i=0,\dots, a-1$). When $p < (1/2)^{1/a} n^{-(k-\ell)-\ell/a}$, we have $n^{\ell+a(k-\ell)} p^a <1/2$.
By Markov's inequality, with probability at least $1/2$, $\mathbb{G}^{(k)}(n,p)$ contains no $\ell$-path on $a$ edges.
When $\ell=1$, if $H_0\cup \mathbb{G}^{(k)}(n,p)$ is \aas~$\ell$-Hamiltonian, then $\mathbb{G}^{(k)}(n,p)$ \aas~contains $n/(k-1) - 2|A| > n/k$ edges (because a $1$-Hamiltonian cycle contains $n/(k-1)$ edges and each vertex is contained in at most $2$ of them). When $p < n^{-(k-1)}/(2k)$, we have $n^{k} p <n/(2k)$. 
By Markov's inequality, with probability at least $1/2$, $\mathbb{G}^{(k)}(n,p)$ contains fewer than $n/k$ edges.


\subsection{Proof ideas}

The proof of Theorem~\ref{main} follows the \textit{absorbing method} introduced by R\"odl, Ruci\'nski, and Szemer\'edi in~\cite{RoRuSz06}. Let us define \emph{absorbers} for our problem. Given an $\ell$-path $P$, we call the first and last $\ell$ vertices two  \emph{$\ell$-ends} of $P$.
Let $H$ be a $k$-graph and $S$ be a set of $k-\ell$ vertices in $V(H)$.
We call an $\ell$-path $P$ an \emph{$S$-absorber} if $V(P)\cap S=\emptyset$ and $V(P)\cup S$ spans an $\ell$-path with the same \emph{$\ell$-ends} as $P$.

Below is a typical procedure for finding a Hamilton $\ell$-cycle in $H$ by the absorbing method.
\begin{enumerate}
\item We show that every $(k-\ell)$-subset of $V(H)$ has many absorbers (of the same fixed length). This enables us to obtain a path $P_{abs}$ of linear length such that every $(k-\ell)$-set has many absorbers on $P_{abs}$.
\item We cover most vertices of $V\setminus V(P_{abs})$ by short paths and then connect them together with $P_{abs}$ into a cycle $C$ .
\item The vertices not covered by $C$ are arbitrarily partitioned into $(k-\ell)$-sets and absorbed by $P_{abs}$ greedily.
\end{enumerate}
The proof thus has three main components: 
\begin{itemize}
\item an \emph{absorbing lemma}, which provides a family $\cA$ of vertex-disjoint short paths such that every $(k-\ell)$-set has many absorbers in $\cA$; 
\item a \emph{path cover lemma}, which allows us to cover most vertices of $V(H)$ by vertex-disjoint paths; and
\item a \emph{connecting lemma}, which allows us to connect $\cA$ into a single path $P_{abs}$ and connect the paths from the path cover lemma together.
\end{itemize}
Let $\mathbb{G}^{(k)}(n,p)\cup H$ be the underlying $k$-graph on the same vertex set $V$. Using Janson's inequality, one can derive the path cover lemma by using the edges of $\mathbb{G}^{(k)}(n,p)$. 
If we have $\delta_{k-\ell}(H)\ge \alpha \binom{n}{\ell}$, then every $(k-\ell)$-set of $V$ has many neighbors and it is not difficult to prove the absorbing lemma. If we have $\delta_{\ell}(H)\ge \alpha \binom{n}{k- \ell}$, then every $\ell$-set of $V$ has many neighbors and it is easy to prove the connecting lemma. However, our Theorem~\ref{main} only assumes $\delta_{1}(H)\ge  \alpha {n}^{k-1}$. In order to prove Theorem~\ref{main},  we ``shave'' $H$ by removing all the edges of $H$ that contain an $\ell$-set of low degree.
This results in a $k$-graph $H'$ in which every $\ell$-subset of $V$ either has a high degree or a zero degree. Our connecting lemma only connects two $\ell$-sets with high degree. 
To overcome the difficulty in absorbing, an earlier version of this paper used the hypergraph regularity method. Following the suggestion of a referee, we now give a simpler absorbing lemma without the regularity method. Note that the shaving process creates a small number of vertices that cannot be absorbed and we will cover these vertices by the path cover lemma.

The rest of the paper is organized as follows.
We state and prove our lemmas in Sections 2 and 3 and prove Theorem~\ref{main} in Section 4.

\medskip
\noindent\textbf{Notation.}
Given positive integers $n\ge b$, let $[n]:= \{1, 2, \dots, n\}$ and $(n)_b:=n(n-1)\cdots (n-b+1) = n!/(n-b)!$.
Given a $k$-graph $H$, we use $v_H$ and $e_H$ to denote the order and size of $H$, respectively. For two (hyper)graphs $G$ and $H$, let $G\cap H$ (or $G\cup H$) denote the (hyper)graph with vertex set $V(G)\cap V(H)$ (or $V(G)\cup V(H)$) and edge set $E(G)\cap E(H)$ (or $E(G)\cup E(H)$). Given a set $X$, $\binom Xk$ denotes the family of all $k$-subsets of $X$. A $k$-graph $(V, E)$ is \emph{complete} if $E= \binom Vk$.
Given $1\le \ell \le k$, the \emph{$\ell$-shadow of a $k$-graph $H$}, denoted by $\partial_{\ell}H$, is the collection of all $\ell$-subsets $S\subset V(H)$ that are contained in some edges of $H$.

In this paper, unless stated otherwise, we assume that the vertex sets of paths and related hypergraphs are \emph{ordered}. 
When $A$ and $B$ are ordered sets, let $AB$  denote their concatenation. 
Given positive integers $k, \ell, a$ such that $\ell<k$, let $P_a$ denote a \emph{$k$-uniform $\ell$-path of length $a$}, that is, a $k$-graph on vertices $v_1, v_2, \dots, v_{a(k-\ell)+\ell}$ with edges $\{v_{i(k-\ell)+1}, \dots, v_{i(k-\ell)+k}\}$ for $i=0,\dots, a-1$.  
In general, given a $k$-graph $F$ on $\{x_1,\dots, x_s\}$ and a $k$-graph $H$, we say that \emph{an ordered subset $(v_1,\dots, v_s)$ of $V(H)$ spans a {(labeled) copy} of $F$} if $\{v_{i_1},\dots, v_{i_k}\}\in E(H)$ whenever $\{x_{i_1},\dots, x_{i_k}\}\in E(F)$.
Given integers $a\ge 1$ and $x\ge 0$, let $P_{a,x}$ denote a $k$-graph on $a(k-\ell)+\ell+2x$ vertices with an order such that the first and last $x$ vertices are isolated and the middle $a(k-\ell)+\ell$ vertices span a copy of $P_a$.

Throughout the rest of the paper, we write $\alpha \ll \beta \ll \gamma$ to mean that we can choose the positive constants
$\alpha, \beta, \gamma$ from right to left. More
precisely, there are increasing functions $f$ and $g$ such that, given
$\gamma$, whenever $\beta \leq f(\gamma)$ and $\alpha \leq g(\beta)$, the subsequent statement holds.
Hierarchies of other lengths are defined similarly.

Throughout the paper we omit floor and ceiling functions when they are not crucial.

\section{Subgraphs in random hypergraphs}\label{sec:random}

In this section we introduce some results related to binomial random $k$-graphs (similar ones can be found in~\cite{BHKM}).
Our main tools are Janson's inequality (see, e.g.,~\cite[Theorem 2.14]{JLR}) and Chebyshev's inequality.



We first recall Janson's inequality.
Let $\Gamma$ be a finite set and let $\Gamma_p$ be a random subset of $\Gamma$ such that each element of $\Gamma$ is included independently with probability $p$.
Let $\cS$ be a family of non-empty subsets of $\Gamma$ and for each $S\in \cS$, let $I_S$ be the indicator random variable for the event $S\subseteq \Gamma_p$.
Thus each $I_S$ is a Bernoulli random variable $\be(p^{|S|})$.
Let $X:=\sum_{S\in \cS} I_S$ and $\lambda = \mathbb E(X)$.
Let $\Delta_{X} := \sum_{S\cap T\neq\emptyset}\mathbb{E}(I_S I_T)$, where the sum is over not necessarily distinct ordered pairs $S, T\in \cS$.
Then Janson's inequality says that for any $0\le t\le \lambda$,
\begin{equation}
\mathbb{P}(X\leq \lambda -t)\leq \exp \left ( -\dfrac{t^2}{2\Delta_{X}}\right ). \label{eq:2}
\end{equation}
Next note that $\var(X)=\mathbb E(X^2) - \mathbb E(X)^2 \le \Delta_X$.
Then by Chebyshev's inequality,
\begin{equation}
\mathbb{P}(X\ge 2\lambda) \le \frac{\var(X)}{\lambda^2} \le \frac{\Delta_X}{\lambda^2}. \label{eq:1}
\end{equation}

Consider the random $k$-graph $\mathbb{G}^{(k)}(n, p)$ on an $n$-vertex set $V$.
Note that we can view $\mathbb{G}^{(k)}(n, p)$ as $\Gamma_p$ with $\Gamma = \binom{V}k$.
Let $\Phi_F = \Phi_F(n, p)= \min \{n^{v_H} p^{e_H}: H\subseteq F, e_H>0\}$.
The following simple proposition is useful.

\begin{prop}\label{prop:1}
Let $F$ be a $k$-graph with $s$ vertices and $f$ edges and let $G:=\mathbb{G}^{(k)}(n, p)$ on $V$.
Given a family $\cA$ of ordered $s$-subsets of $V$,
let $X_\cA=\sum_{A\in \cA} I_A$, 
where $I_A$ is the Bernoulli random variable for the event that $A$ {spans a labeled copy of $F$ in} $G$.
Then $\Delta_{X_\cA} \leq 2^s s! \, n^{2s}p^{2f}/\Phi_F$.
\end{prop}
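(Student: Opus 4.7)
The plan is to unpack Janson's definition and bring $\Phi_F$ into the picture through the subhypergraph of $F$ formed by the shared edges. First I note that each $A \in \cA$ corresponds to a set $S_A \subseteq \binom{V}{k}$ of $f$ edges (those of the labeled $F$-copy spanned by $A$), so that
\[
\Delta_{X_\cA} \;=\; \sum_{\substack{(A,B) \in \cA^2 \\ S_A \cap S_B \neq \emptyset}} \mathbb{E}(I_A I_B) \;=\; \sum p^{\,2f - h},
\]
where $h := |S_A \cap S_B| \ge 1$ and $|S_A \cup S_B| = 2f - h$.

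Second, I would identify $h$ as $e_H$ for a suitable subhypergraph $H \subseteq F$, so that the definition of $\Phi_F$ applies. Let $H$ consist of those edges $e \in E(F)$ whose $A$-image lies in $S_A \cap S_B$. Since the entries of $A$ are distinct, the map sending $e$ to its $A$-image is injective on $E(F)$, hence $e_H = h \ge 1$. Moreover, every vertex of $V(H)$ is mapped by $A$ into the common set $A \cap B$, so $v_H \le |A \cap B|$. By the definition of $\Phi_F$, $n^{v_H} p^{e_H} \ge \Phi_F$, so
\[
p^{\,2f - h} \;\le\; \frac{p^{2f}\, n^{v_H}}{\Phi_F} \;\le\; \frac{p^{2f}\, n^{|A\cap B|}}{\Phi_F}.
\]

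Third, I would group the ordered pairs by $i := |A \cap B|$. For any fixed $A$, the number of ordered $s$-tuples $B$ with distinct entries in $V$ and $|A \cap B| = i$ is at most $\binom{s}{i}^2 i!\, n^{s-i}$ (choose the $i$ positions of $B$ that hold shared vertices, a bijection from these onto $i$ positions of $A$, and the remaining $s-i$ entries of $B$), so the number of such pairs is at most $\binom{s}{i}^2 i!\, n^{2s-i}$. Since a shared edge forces $i \ge k$, summing gives
\[
\Delta_{X_\cA} \;\le\; \sum_{i=k}^{s} \binom{s}{i}^{\!2} i!\, n^{2s-i} \cdot \frac{p^{2f} n^i}{\Phi_F} \;=\; \frac{n^{2s} p^{2f}}{\Phi_F} \sum_{i} \binom{s}{i}^{\!2} i! \;\le\; \frac{2^s s!\, n^{2s} p^{2f}}{\Phi_F},
\]
using $\binom{s}{i}^2 i! = \binom{s}{i}(s)_i \le \binom{s}{i} s!$ and $\sum_i \binom{s}{i} = 2^s$.

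I do not expect any serious obstacle; the one mildly delicate point is verifying that the shared-edge subhypergraph $H$ really satisfies $e_H = h$ and $v_H \le |A \cap B|$, which is what lets $\Phi_F$ enter cleanly. Everything after that is routine double counting.
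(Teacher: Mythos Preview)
Your proof is correct and follows essentially the same approach as the paper: group pairs $(A,B)$ by $i=|A\cap B|$, bound $\mathbb{E}(I_AI_B)$ via a subgraph of $F$ on the shared part so that $\Phi_F$ enters, and then sum $\binom{s}{i}(s)_i \le \binom{s}{i}s!$ over $i$. The only cosmetic difference is that the paper introduces $f_i$, the maximum number of edges of an $i$-vertex subgraph of $F$, and bounds $h\le f_i$ before invoking $n^i p^{f_i}\ge \Phi_F$, whereas you go directly through your subgraph $H$ with $e_H=h$ and $v_H\le i$; both routes yield the identical final estimate.
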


\begin{proof}
Fix $1\le i\le s$. There are $\binom{s}{i} (s)_i$ ways that two labeled $s$-sets share exactly $i$ vertices. Fixing two such $s$-sets, there are $(n)_{2s - i}$ ways mapping their $2s-i$ vertices into  $V$.
Let $f_i$ denote the maximum number of edges of an $i$-vertex subgraph of $F$.  We have
\[
\Delta_{X_\cA} \le \sum_{i=1}^s \binom{s}{i} (s)_i (n)_{2s - i} p^{2f - f_i} \le \sum_{i=1}^s \binom si s! \, n^{2s-i} p^{2f - f_i} 
\le 2^{s} s! \, n^{2s}p^{2f}/\Phi_F. \qedhere
\]
\end{proof}

The next two lemmas gather all the properties of $\mathbb{G}^{(k)}(n,p)$ that we will use.

\begin{lemma}\label{lm:gnp}
Let $k, \ell, a, x\in \mathbb Z$ such that $k\ge 3, 1\le \ell\le k-1$, $a\geq 1$, and $0\le x\le k$. Write $b = b(x) =2x+\ell+(k-\ell)a$.
Suppose $0<\epsilon\le \ell/(3a)$ and $1/n \ll 1/C \ll \gamma, 1/a, 1/k$.
Let $G=\mathbb{G}^{(k)}(n, p)$ be a random $k$-graph with vertex set $V$, where $p$ satisfies~\eqref{eq:p}. Then the following properties hold.
\begin{enumerate}
\item Let $L$ be a family of $\ell$-sets in $V(G)$ and in addition assume $a\ge \ell/(k-\ell)$. 
Then for every $R, V^*\subseteq V(G)$ such that $|V^*|\ge \gamma n$ and $|L\cap \binom{R}{\ell}|\ge \gamma n^{\ell}$, with probability at least $1-\exp(-3 n)$, $G$ contains a copy of $P_{a}$ whose $\ell$-ends are in $L\cap \binom{R}{\ell}$ and whose other vertices are from $V^*$. 
Moreover, this property holds for all choices of $R$ and $V^*$ simultaneously with probability $1-o(1)$. \label{item-I}
\item With probability at least $1-o(1)$, at most $2 p^{a} n^{b}$ ordered $b$-subsets of $V(G)$ span copies of $P_{a,x}$. \label{item-III}
\item With probability at least $1-o(1)$, $G$ contains at most $4 b^2 n^{2b-1} p^{2a}$ pairs of overlapping (i.e., not vertex-disjoint) copies of $P_{a,x}$. \label{item-IV}
\end{enumerate}
\end{lemma}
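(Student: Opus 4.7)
My plan is to prove the three parts of Lemma~\ref{lm:gnp} independently, using Janson's inequality~\eqref{eq:2} for (1), Chebyshev's inequality~\eqref{eq:1} for (2), and a moment-method concentration argument for (3). The key common ingredient is an estimate on $\Phi_{P_a} = \min_{1\le j\le a} n^{\ell + j(k-\ell)} p^j$, obtained by noting that any edge-carrying subgraph of the path $P_a$ with $j$ edges has at least $\ell + j(k-\ell)$ vertices. When $\ell \geq 2$, the hypotheses $p \geq n^{-(k-\ell)-\epsilon}$ and $\epsilon \leq \ell/(3a)$ give $\Phi_{P_a} \geq n^{\ell - a\epsilon} \geq n^{2\ell/3}$. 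When $\ell = 1$, the hypothesis $p \geq Cn^{-(k-1)}$ with $1/C \ll \gamma$ forces $n^{k-\ell}p \geq C \geq 1$, so the minimum is attained at $j = 1$ and $\Phi_{P_a} = n^k p \geq Cn$. In both regimes $\gamma^{O(1)}\Phi_{P_a}/(2^s s!)$ exceeds $6n$.

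For part (1), I would fix $R,V^* \subseteq V$ meeting the size conditions and let $\cA$ be the family of ordered $s$-tuples ($s = \ell + a(k-\ell)$) whose first and last $\ell$ entries form $\ell$-sets in $L \cap \binom{R}{\ell}$ and whose middle vertices lie in $V^*$. A direct count yields $\lambda := \mathbb{E}(X_\cA) \geq \gamma^{O(1)} n^s p^a$, where $a \geq \ell/(k-\ell)$ guarantees enough room for the interior vertices. Proposition~\ref{prop:1} gives $\Delta_{X_\cA} \leq 2^s s!\, n^{2s} p^{2a}/\Phi_{P_a}$, so $\lambda^2/(2\Delta_{X_\cA}) \geq 3n$ by the above. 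Janson's inequality then yields $\mathbb{P}(X_\cA = 0) \leq \exp(-3n)$, and a union bound over the at most $4^n$ choices of $(R,V^*)$ gives the simultaneous version with probability $1 - o(1)$.

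For part (2), let $X$ count ordered $b$-tuples of $V(G)$ that span labeled copies of $P_{a,x}$. The $2x$ isolated vertices impose no edge constraint, so $\lambda := \mathbb{E}(X) \leq n^b p^a$. Since every edge-carrying subgraph of $P_{a,x}$ lies inside its middle copy of $P_a$, we have $\Phi_{P_{a,x}} = \Phi_{P_a}$. Proposition~\ref{prop:1} combined with~\eqref{eq:1} gives $\mathbb{P}(X \geq 2n^b p^a) \leq \Delta_X/\lambda^2 \leq 2^b b!/\Phi_{P_a} = o(1)$.

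For part (3), let $Y$ count ordered pairs $(A,B)$ of distinct labeled copies of $P_{a,x}$ with $A \cap B \neq \emptyset$. Decomposing by $i := |A \cap B| \in \{1,\dots,b-1\}$ and by the number of shared edges, the $i = 1$ contribution (which necessarily has no shared edges, since $k \geq 3$) dominates, giving $\mathbb{E}(Y) \leq (1+o(1))\, b^2 n^{2b-1} p^{2a}$. For $\ell \geq 2$, the bound $\Phi_{P_a} \gg n$ sharpens this to $\mathbb{E}(Y) = o(n^{2b-1} p^{2a})$, so Markov's inequality alone concludes. For $\ell = 1$, where $\mathbb{E}(Y)$ is already of the target order, I would apply Chebyshev's inequality to $Y$ itself, bounding $\var(Y)$ by a Janson-type correlation sum over overlap-configuration pairs that share at least one edge; this sum is again controlled by a factor of $1/\Phi_{P_a}$, yielding $\var(Y) = o(\mathbb{E}(Y)^2)$ and hence $Y \leq 2\mathbb{E}(Y) \leq 4b^2 n^{2b-1}p^{2a}$ with probability $1 - o(1)$. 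The main technical hurdle is precisely this variance computation in the $\ell = 1$ regime: the first-moment bound matches the target only up to a constant, so Markov's inequality alone does not give $1-o(1)$ concentration, and one must carefully enumerate overlap-configuration pairs by their shared vertex- and edge-counts to exploit $\Phi_{P_a} \geq Cn$.
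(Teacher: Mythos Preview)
Your treatments of parts (1) and (2) match the paper's proof essentially verbatim: Janson for (1) with a union bound over the $4^n$ choices of $(R,V^*)$, and Chebyshev via Proposition~\ref{prop:1} for (2). The $\Phi_{P_{a,x}}$ estimate you give is exactly the one the paper uses.

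Part (3), however, has a genuine gap in the $\ell\ge 2$ branch. You correctly compute that the $i=1$ contribution to $\mathbb{E}(Y)$ is of order $b^2 n^{2b-1}p^{2a}$ (two copies sharing a single vertex share no edges, so the joint probability is exactly $p^{2a}$, and there are $\Theta(b^2 n^{2b-1})$ such ordered pairs). But then you assert that ``for $\ell\ge 2$, the bound $\Phi_{P_a}\gg n$ sharpens this to $\mathbb{E}(Y)=o(n^{2b-1}p^{2a})$'', which is false: the $i=1$ term does not involve $\Phi$ at all, so $\mathbb{E}(Y)=\Theta(b^2 n^{2b-1}p^{2a})$ for every $\ell$. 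Markov's inequality at threshold $4b^2 n^{2b-1}p^{2a}$ therefore only gives a probability bound of roughly $1/4$, not $1-o(1)$.

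The fix is simply to drop the case distinction and run the Chebyshev argument you already outline for $\ell=1$ in all cases; this is exactly what the paper does. To bound $\var(Y)$ one must control pairs of overlapping pairs, i.e.\ four copies $T_1,T_2,T_3,T_4$ of $P_{a,x}$ with $V(T_1)\cap V(T_2)\ne\emptyset$, $V(T_3)\cap V(T_4)\ne\emptyset$, and $E(T_1\cup T_2)\cap E(T_3\cup T_4)\ne\emptyset$. The paper sets $H_1=T_1\cap T_2$, $H_2=(T_1\cup T_2)\cap T_3$, $H_3=(T_1\cup T_2\cup T_3)\cap T_4$ and observes that each $H_i$ is a nonempty subgraph of $P_{a,x}$, so $n^{v_{H_i}}p^{e_{H_i}}\ge \min(n,\Phi_{P_{a,x}})\ge n$. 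This yields $\Delta_Y=O(n^{4b-3}p^{4a})$, whence $\Delta_Y/\mathbb{E}(Y)^2=O(1/n)=o(1)$. Your description ``controlled by a factor of $1/\Phi_{P_a}$'' is in the right spirit but undersells what is needed: you need a saving of at least one factor of $n$ relative to $\mathbb{E}(Y)^2$, and this comes from tracking all three intersections, not just one.
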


\begin{proof}

Note that if $H$ is a subgraph of $P_{a,x}$, then $v_H\ge \ell+(k-\ell)e_H$. Thus,
\begin{equation}
\label{eq:phi1}
\Phi_{P_{a,x}} = \min_{1\le e_H\le a} n^{v_H} p^{e_H} \ge \min_{1\le e_H\le a} n^{\ell+(k-\ell)e_H} p^{e_H} = n^{\ell} \min_{1\le e_H\le a} (n^{k-\ell} p)^{e_H} \ge 
\begin{cases}
n^{\ell - a\epsilon} & \text{if } \ell\ge 2,\\
C n & \text{if }\ell=1,
\end{cases}
\end{equation}
where we used~\eqref{eq:p} in the last inequality.
Since $\eps\le \ell/(3a)$, $\Phi_{P_{a,x}}\ge C n$ holds for all $\ell$.

Given a family $\cA$ of ordered $b$-sets of vertices in $V$, let $\cS$ consist of the edge sets of the labeled copies of $P_{a,x}$ spanned on $A$ in the complete $k$-graph on $V$ for all $A\in \cA$. Let $X_\cA=\sum_{S\in \cS} I_S$, where $I_S$ is the indicator variable for the event $S\subseteq E(G)$ (thus $X_\cA$ counts the number of $A\in \cA$ that spans a copy of $P_{a,x}$ in $G$).
Since $\Phi_{P_{a,x}}\ge C n$, Proposition~\ref{prop:1} implies that
\begin{equation}\label{eq:phi}
\Delta_{X_\cA} \leq 2^{b} b! \, n^{2b}p^{2a}/\Phi_{P_{a,x}}\le ( 2^b b! /C) n^{2b-1}p^{2a} \le (\gamma^{2b}/24) n^{2b-1}p^{2a}
\end{equation}
because $1/C\ll \gamma, 1/a, 1/k$.

For~\eqref{item-I}, fix such a choice for $R$ and $V^*$ and let $x=0$ and $b= \ell+(k-\ell)a$. 
Let $\mathcal A$ be the family of all ordered $(\ell+(k-\ell)a)$-sets in $V(G)$ whose first and last $\ell$ vertices are in $L\cap \binom{R}{\ell}$ and all other vertices are from $V^*$. 
Then $|\cA| \ge (\gamma n^\ell)^2 (\gamma n)^{(k-\ell)a-\ell}/2 \ge (\gamma n)^{b}/2$.
Recall that $X_\cA$ counts the number of $A\in \cA$ that spans a copy of $P_a$ in $G$.
Then $(\gamma n)^{b} p^a/2\le \mathbb{E}(X_{\mathcal A})\le n^{b} p^a$.
By~\eqref{eq:2} and \eqref{eq:phi}, we have 
\[
\mathbb{P} (X_\cA=0) \le 
\exp \left( -\frac{\mathbb{E}(X_{\mathcal A})^2}{2\Delta_{X_{\cA}}} \right)
\le \exp \left( -\frac{(\gamma n)^{2b} p^{2a}/4}{(\gamma^{2b}/12) n^{2b-1}p^{2a} } \right)
= \exp( - 3 n).
\]
The second part of~\eqref{item-I} follows from the union bound because there are at most $2^n$ choices for each of $R$ and $V^*$ and $2^n \cdot 2^n \cdot \exp(- 3 n) \le \exp(-n)$.

For \eqref{item-III}, 
let $X_2$ be the random variable that counts the number of labeled copies of $P_{a,x}$ in $G$. 
Then $\mathbb E(X_2)= {(n)}_{b}p^{a}$. By~\eqref{eq:1} and \eqref{eq:phi}, we have
\[
\mathbb{P}(X_2\ge 2p^an^b)\leq \mathbb{P}(X_2\ge 2\mathbb{E}(X_2))\le \frac{\Delta_{X_2 }}{\mathbb{E}(X_2)^2} \le \frac{ (\gamma^{2b}/24) n^{2b-1}p^{2a} }{((n)_{b} \, p^{a})^2} = o(1).
\]

For \eqref{item-IV}, 
let $\cQ$ consist of edge sets of all overlapping pairs of $P_{a,x}$ in the complete $k$-graph on $V$. 
Let $Y=\sum_{Q\in \cQ}I_Q$, where $I_Q$ is the indicator variable for the event $Q\subseteq E(G)$.
We first estimate $\mathbb{E}(Y)$.
For $X_2$ defined above, we have $\Delta_{X_2} = \mathbb{E}(\sum_{Q}I_Q)$, where the sum is over all $Q\in \cQ$ whose two copies of $P_{a,x}$ share at least one edge.
As shown in the proof of Proposition~\ref{prop:1}, for $1\le i\le b$, there are $(n)_{2b - i} \binom b i (b)_i$
members of $\cQ$ whose two copies of $P_{a,x}$ share exactly $i$ vertices. 
Hence $\mathbb{E}(Y) \ge (n)_{2b - 1} b^2 \, p^{2a} \ge {n}^{2b - 1} p^{2a} b^2/2$. Since $\sum_{2\le i\le b} (n)_{2b-i} (b)_i^2 \le  n^{2b-1}/2$, using \eqref{eq:phi}, we derive that 
\[
\mathbb{E}(Y)\le (n)_{2b - 1} b^2 \, p^{2a} + (n^{2b-1}/2)\, p^{2a} + \Delta_{X_{2}} \le 2b^2 {n}^{{2b - 1}} p^{2a}.
\]

We next compute $\var(Y)$. 
For each $Q\in \cQ$, let $S_Q$ denote the $k$-graph induced by $Q$ (thus $S_Q$ is the union of two overlapping copies of $P_{a,x}$).
Fix two $Q, R\in \cQ$ such that $Q\cap R \ne \emptyset$. 
We write $S_Q=T_1\cup T_2$ and $S_R=T_3\cup T_4$, where $T_i$'s are copies of $P_{a,x}$ such that $E(T_1)\cap E(T_3)\neq\emptyset$.
Define $H_1:=T_1\cap T_2$, $H_2:=(T_1\cup T_2)\cap T_3$ and $H_3:=(T_1\cup T_2\cup T_3)\cap T_4$.
Since $V(T_1)\cap V(T_2)\ne \emptyset$, $V(T_3)\cap V(T_4)\ne \emptyset$, and $E(T_1)\cap E(T_3)\ne \emptyset$,
it follows that $v_{H_i}\ge 1$ for $i=1,2,3$.
We claim that $n^{v_{H_i}}p^{e_{H_i}}\ge n$ for $i=1,2,3$.
Indeed, since each $H_i$ is a subgraph of $P_{a,x}$, if $e_{H_i}\ge 1$, then by~\eqref{eq:phi}, $n^{v_{H_i}}p^{e_{H_i}}\ge \Phi_{P_{a,x}}\ge C n$; otherwise $e_{H_i}=0$ and then we have  $n^{v_{H_i}}p^{e_{H_i}} = n^{v_{H_i}} \ge n^1=n$.
Consequently,
\begin{equation}\label{eq:estnp}
n^{v_{H_1}}p^{e_{H_1}}\cdot n^{v_{H_2}}p^{e_{H_2}}\cdot n^{v_{H_3}}p^{e_{H_3}} \ge n^{3}.
\end{equation}

Let $D=D(b,k)$ be the number of choices for $H_1, H_2, H_3$. Fix some $H_1, H_2, H_3$. Let
$
\Delta_{H_1, H_2, H_3}= \sum_{Q, R} \mathbb{E}(I_Q I_R),
$
where the sum is over all $Q, R\in \cQ$ with $Q\cap R\neq\emptyset$ that generate the given $H_1, H_2, H_3$. 
It is easy to see that the sum contains at most
\[
\binom{b}{v_{H_1}} (b)_{v_{H_1}} \binom{b}{v_{H_2}} (2b-v_{H_1})_{v_{H_2}} \binom{b}{v_{H_3}} (3b- v_{H_1} - v_{H_2})_{v_{H_3}} (n)_{4b-v_{H_1} - v_{H_2} -v_{H_3}} \le 2^{3b} (3b)! n^{4b-(v_{H_1} + v_{H_2} + v_{H_3})} 
\]
terms.
Together with~\eqref{eq:estnp}, we obtain that
\[
\Delta_{H_1, H_2, H_3}= 
\sum_{Q, R} \mathbb{E}(I_Q I_R) \le 2^{3b} (3b)! n^{4b-(v_{H_1} + v_{H_2} + v_{H_3})} p^{4a - (e_{H_1} + e_{H_2} + e_{H_3})} \le 2^{3b} (3b)! n^{4b - 3} p^{4a}.
\]
Consequently,
\[
\Delta_Y = \sum_{H_1, H_2, H_3} \Delta_{H_1, H_2, H_3} \le D 2^{3b} (3b)! n^{4b-3}p^{4a}.
\]
By~\eqref{eq:1}, we derive that
\[
\mathbb{P}\big(Y\ge 4 b^2 n^{2b-1}p^{2a}) \le \frac{\Delta_Y}{\mathbb{E}(Y)^2} \le \frac{D 2^{3b} (3b)! n^{4b-3}p^{4a}}{({n}^{{2b-1}} p^{2a} b/2)^2} = o(1).
\]
This confirms \eqref{item-IV}.
%
\end{proof}

In Lemma~\ref{lm:gnp} we assume that $p$ satisfies \eqref{eq:p} and obtain that $\Phi_{P_{a,x}}\ge C n$. This is necessary for Part~\eqref{item-I}, in which we use the union bound on $2^n$ events. When there are only  
polynomially many events, it suffices to have $\Phi_{P_{a,x}}\ge n^c$ for some $0<c<1$, which occurs when $p\ge n^{-(k-\ell)-\eps}$ (for all $\ell\ge 1$) and $\epsilon< \ell/a$. We use this weaker condition on $p$ in the following lemma because we only have this condition in the proof of Lemma~\ref{lm:almost}.

\begin{lemma}\label{lm:gnp2}
Let $k, \ell, a, x\in \mathbb Z$ such that $k\ge 3, 1\le \ell\le k-1$, $a\geq 1$, and $0\le x\le k$. Write $b = b(x) =2x+\ell+(k-\ell)a$.
Suppose $0< \epsilon\le \ell/(2a)$ and $1/n \ll \gamma, 1/a, 1/k$.
Let $V$ be an $n$-vertex set, and let $\F_1, \dots, \F_t$ be $t\le n^{2k}$ families of $\gamma n^{b}$ ordered $b$-sets on $V$.
Suppose $G=\mathbb{G}^{(k)}(n,p)$ with $p\ge n^{-(k-\ell)-\eps}$, then with probability at least $1-\exp(-n^{1/3})$, for all $i\in [t]$, at least $(\gamma/2) p^{a}  n^{b}$ members of $\F_i$ span copies of $P_{a,x}$. 
\end{lemma}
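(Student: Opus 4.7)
The plan is to run a Janson-type argument for each family $\F_i$ separately, then take a union bound over the (polynomially many) $t$ families. The weaker hypothesis on $p$ is enough because we only lose a factor $n^{2k}$ in the union bound, while Janson's tail will give a stretched-exponential bound in $n$.

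First I would verify that under the current assumption $p \ge n^{-(k-\ell)-\epsilon}$ with $\epsilon \le \ell/(2a)$ the quantity $\Phi_{P_{a,x}}$ is still polynomially large in $n$. Following the same computation as in~\eqref{eq:phi1},
\[
\Phi_{P_{a,x}} \ge n^{\ell}\min_{1\le e_H\le a}(n^{k-\ell}p)^{e_H} \ge n^{\ell - a\epsilon} \ge n^{\ell/2} \ge n^{1/2}.
\]

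Next, fix $i\in[t]$ and let $X_i = \sum_{A\in \F_i} I_A$, where $I_A$ is the indicator that the ordered $b$-set $A$ spans a labeled copy of $P_{a,x}$ in $G$. Then
\[
\mathbb{E}(X_i) = |\F_i|\, p^{a} = \gamma n^{b} p^{a}.
\]
By Proposition~\ref{prop:1} and the bound above on $\Phi_{P_{a,x}}$,
\[
\Delta_{X_i} \le 2^{b} b!\, \frac{n^{2b}p^{2a}}{\Phi_{P_{a,x}}} \le 2^{b} b!\, n^{2b - \ell/2}p^{2a}.
\]
Applying Janson's inequality~\eqref{eq:2} with $t = \mathbb{E}(X_i)/2$ yields
\[
\mathbb{P}\!\left(X_i \le \tfrac{\gamma}{2}\, p^{a}n^{b}\right) \le \exp\!\left(-\frac{(\gamma n^{b}p^{a}/2)^{2}}{2\cdot 2^{b} b!\, n^{2b - \ell/2}p^{2a}}\right) = \exp\!\left(-\frac{\gamma^{2}}{2^{b+3}b!}\, n^{\ell/2}\right).
\]
Since $\ell\ge 1$ and $1/n \ll \gamma, 1/a, 1/k$ (and hence $b$ and $k$ are fixed relative to $n$), for sufficiently large $n$ the exponent on the right dominates $2k\log n + n^{1/3}$, so this probability is at most $n^{-2k}\exp(-n^{1/3})$.

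Finally, a union bound over the $t\le n^{2k}$ families $\F_1,\dots,\F_t$ gives that the bad event (some $\F_i$ contains fewer than $(\gamma/2)p^{a}n^{b}$ members spanning copies of $P_{a,x}$) has probability at most $n^{2k}\cdot n^{-2k}\exp(-n^{1/3}) = \exp(-n^{1/3})$, as required. The only mild subtlety is the choice of the exponent $n^{1/3}$ in the statement: since we only know $\Phi_{P_{a,x}}\ge n^{\ell/2}\ge n^{1/2}$, any exponent strictly smaller than $1/2$ works for the union bound, and $1/3$ is safely within this range.
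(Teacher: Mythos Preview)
Your proof is correct and follows essentially the same approach as the paper: bound $\Phi_{P_{a,x}}\ge n^{\ell-a\epsilon}\ge n^{1/2}$ using $\epsilon\le \ell/(2a)$, apply Proposition~\ref{prop:1} and Janson's inequality~\eqref{eq:2} to each $\F_i$, and finish with a union bound over the $t\le n^{2k}$ families. The paper uses the slightly cruder bound $\Phi_{P_{a,x}}\ge \sqrt{n}$ in place of your $n^{\ell/2}$, but the computations are otherwise identical.
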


\begin{proof}
By~\eqref{eq:phi1} and $\eps\le \ell/(2a)$, we have $\Phi_{P_{a,x}} \ge n^{\ell-a\eps} \ge \sqrt n$.
Fix $i\in [t]$ and let $X_{\F_i}$ be the random variable that counts the number of the members of $\F_i$ that span copies of $P_{a,x}$. 
By~\eqref{eq:phi}, we have $\Delta_{X_{\cF_i}} \leq 2^{b} b! \, n^{2b}p^{2a}/\sqrt n$ and note that $\mathbb{E}(X_{\cF_i})=\gamma n^{b}p^a$.
By~\eqref{eq:2}, we have 
\[
\mathbb{P}\big(X_{\cF_i}\leq (\gamma/2)n^{b}p^{a}\big) \le \exp \left( -\frac{ (\mathbb{E}(X_{\cF_i})/2)^2 }{2\Delta_{X_{\cF_i}}} \right)\le \exp \left( -\frac{(\gamma/2)^2n^{2b}p^{2a}}{2^{b} b! \, n^{2b}p^{2a}/\sqrt n} \right) \le \exp(- 2n^{1/3}).
\]
Since $n^{2k} \exp(- 2n^{1/3})\le \exp(- n^{1/3})$, the result follows from the union bound.
\end{proof}

\section{Lemmas}

In this section we prove all the lemmas that are needed for the proof of Theorem~\ref{main}.

Since we assume $\delta_1(H)\ge \alpha n^{k-1}$, unless $\ell=1$, the $k$-graph $H$ may contain some $\ell$-sets $S$ whose degree is too low to be used for connection. To overcome this, we simply delete all edges that contain $S$.
The following lemma reflects this ``shaving'' process.

\begin{lemma}\label{lem:shave}
Let $0<\eta \le \alpha, 1/k$.
Let $H$ be an $n$-vertex $k$-graph with $\delta_1(H)\ge \alpha n^{k-1}$.
Then there exists a spanning subgraph $H'$ of $H$, satisfying the following properties.
\begin{enumerate}
\item $e(H')\ge {\alpha} n^{k}/(2k)$.
\item $\deg_{H'}(v)\ge 2\alpha n^{k-1}/3$ for all but at most $3k\eta^2 n/\alpha$ vertices of $H$.
\item For every $\ell$-set $S$ of $V(H)$, either $\deg_{H'}(S)=0$ or $\deg_{H'}(S) \ge \eta^2 n^{k-\ell}$.
\end{enumerate}
\end{lemma}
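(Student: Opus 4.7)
I would prove the lemma by defining $H'$ via a simple greedy shaving procedure and then bookkeeping the edge loss. Initialize $H' := H$. While there exists an $\ell$-subset $S \subset V(H)$ with $0 < \deg_{H'}(S) < \eta^2 n^{k-\ell}$, delete from $H'$ all edges containing $S$. The process terminates (each iteration strictly decreases $e(H')$), and by construction every $\ell$-set in the resulting $H'$ has degree either $0$ or at least $\eta^2 n^{k-\ell}$, so property~(3) is automatic.

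Next, I would bound the total number of removed edges. Once an $\ell$-set $S$ is shaved, $\deg_{H'}(S)=0$ forever, so each $\ell$-set is shaved at most once, and each shaving removes at most $\eta^2 n^{k-\ell}$ edges. Hence
\[
e(H)-e(H') \;\le\; \binom{n}{\ell}\,\eta^2 n^{k-\ell} \;\le\; \eta^2 n^{k}.
\]
For (1), from $\delta_1(H)\ge \alpha n^{k-1}$ we get $e(H)\ge \alpha n^{k}/k$, so
\[
e(H')\;\ge\;\frac{\alpha n^{k}}{k}-\eta^2 n^{k}\;\ge\;\frac{\alpha n^{k}}{2k},
\]
where the last inequality uses the hierarchy $\eta\le \alpha,1/k$ (concretely, $\eta^2\le \alpha/(2k)$ follows with room to spare).

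For (2), let $B$ be the set of vertices $v$ with $\deg_{H'}(v) < 2\alpha n^{k-1}/3$. Each such vertex lost more than $\alpha n^{k-1}/3$ edges during the shaving. Counting edge--vertex incidences lost, we have
\[
|B|\cdot \frac{\alpha n^{k-1}}{3}\;\le\; k\,\bl e(H)-e(H')\br\;\le\; k\eta^2 n^{k},
\]
which rearranges to $|B|\le 3k\eta^2 n/\alpha$, giving (2).

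The steps are all routine; the only point requiring a moment of care is confirming that each $\ell$-set is shaved at most once, which is what justifies the clean bound $\binom{n}{\ell}\eta^2 n^{k-\ell}$ on the total edge loss. No real obstacle arises beyond this bookkeeping.
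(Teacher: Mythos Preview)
Your proposal is correct and essentially identical to the paper's proof: the same greedy shaving of low-degree $\ell$-sets, the same bound $e(H)-e(H')\le \binom{n}{\ell}\eta^2 n^{k-\ell}$, and the same incidence-counting argument for~(2). The only cosmetic difference is that the paper bounds $\binom{n}{\ell}\eta^2 n^{k-\ell}\le \alpha n^k/(2k)$ directly rather than going through $\eta^2 n^k$; note that your ``with room to spare'' remark about $\eta^2\le \alpha/(2k)$ is slightly optimistic (from $\eta\le\alpha$ and $\eta\le 1/k$ one only gets $\eta^2\le \alpha/k$), but the paper shares this looseness and for $\ell\ge 2$ the factor $1/\ell!$ in $\binom{n}{\ell}$ recovers the missing~$2$.
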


\begin{proof}
Starting from $H$, we iteratively do the following. If the current $k$-graph contains an $\ell$-set $S$ whose degree is less than $\eta^2 n^{k-\ell}$, then we delete all the edges containing $S$.
Clearly the iteration lasts at most $\binom{n}{\ell}$ steps.
Let $H'$ be the resulting $k$-graph, then (3) holds.
Since we deleted at most $\eta^2 n^{k-\ell}$ edges in each step, we have $e(H) - e(H')\le \binom{n}{\ell}\eta^2 n^{k-\ell}\le {\alpha} n^{k}/(2k)$.
Together with $e(H)\ge (n/k) \alpha n^{k-1}$, (1) follows.
For (2), let $V_0$ be the set of vertices $v$ in $H'$ such that $\deg_{H'}(v)\le 2\alpha n^{k-1}/3$, then since $\delta_1(H)\ge \alpha n^{k-1}$, we have
\[
|V_0| \cdot \frac13\alpha n^{k-1} \le k(e(H) - e(H')) \le k\binom{n}{\ell}\eta^2 n^{k-\ell} \le k\eta^2 n^k.
\]
Thus $|V_0|\le 3k\eta^2 n/\alpha$ and (2) holds.
\end{proof}

\medskip
We recall the following Chernoff's inequality (see, e.g.,~\cite{JLR}).
For $x >0$ and a binomial random variable $X=Bin(n, \zeta)$, it holds that 
\begin{align}
\mathbb P(X\ge n\zeta + x)< e^{-x^2/(2 n\zeta + x/3)} \quad \text{and} \quad \mathbb{P}(X\le n\zeta - x)< e^{-x^2/(2 n\zeta)}. \label{eq:cher2}
\end{align}

The following lemma helps us to build connectors and absorbers.

\begin{lemma}\label{prop:prob}
Let $k, \ell, a, x, b,\epsilon$ be as in Lemma~\ref{lm:gnp}.
Suppose $1/n \ll1/C \ll \beta, 1/b$.
Let $V$ be an $n$-vertex set, and let $\F_1, \dots, \F_t$ be $t\le n^{2k}$ families of $24\beta n^{b}$ ordered $b$-sets on $V$.
Suppose $G=\mathbb{G}^{(k)}(n, p)$ on $V$ and $p$ satisfies~\eqref{eq:p}.
Then a.a.s.~there exists a family $\F\subseteq \bigcup_{i\in [t]} \F_i$ of at most $\beta n$ disjoint ordered $b$-sets such that $|\F_i\cap \F|\ge \beta^2 n/b^2$ for each $i\in [t]$, and each member of $\F$ spans a labeled copy of $P_{a,x}$ in $G$.
\end{lemma}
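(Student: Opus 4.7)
My plan is to combine the enumeration bounds of Lemma~\ref{lm:gnp} with the family-wise counting of Lemma~\ref{lm:gnp2} and a single sample-and-delete step. Let $\mathcal{P}$ denote the (random, $G$-dependent) set of all ordered $b$-sets of $V$ that span labeled copies of $P_{a,x}$ in $G$. By Lemma~\ref{lm:gnp}\eqref{item-III} and \eqref{item-IV}, a.a.s.\ $|\mathcal{P}|\le 2p^{a}n^{b}$ and the number of overlapping pairs of members of $\mathcal{P}$ is at most $4b^2 n^{2b-1}p^{2a}$. By Lemma~\ref{lm:gnp2} applied with $\gamma=24\beta$ (whose hypotheses on $p$ and $\epsilon$ are weaker than ours), a.a.s.\ every family satisfies $|\mathcal{F}_i\cap\mathcal{P}|\ge 12\beta p^{a}n^{b}$ simultaneously for all $i\in[t]$. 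Everything that follows is conditional on these a.a.s.\ events.

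With $G$ now fixed, retain each member of $\mathcal{P}$ independently with probability $q:=\beta/(4b^2 p^{a}n^{b-1})$ (one checks $q<1$ using \eqref{eq:p} and $1/C\ll\beta,1/b$), producing $\mathcal{P}^*\subseteq\mathcal{P}$; then delete one $b$-set from each overlapping pair surviving in $\mathcal{P}^*$. The deleted sets form a vertex cover of the overlap graph on $\mathcal{P}^*$, so their number is at most the random variable $D$ counting overlapping pairs inside $\mathcal{P}^*$, and the residual family $\mathcal{F}$ is pairwise vertex-disjoint by construction, with every member spanning a copy of $P_{a,x}$. A direct computation using the three inputs above gives
\[
\mathbb{E}[|\mathcal{P}^*|] \le \frac{\beta n}{2b^2}, \qquad \mathbb{E}[|\mathcal{F}_i\cap\mathcal{P}^*|] \ge \frac{3\beta^2 n}{b^2}, \qquad \mathbb{E}[D] \le \frac{\beta^2 n}{4b^2},
\]
so the alteration cost is dominated by the per-family gain.

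Finally, I pass to a simultaneously good outcome. Since each $|\mathcal{F}_i\cap\mathcal{P}^*|$ is a sum of independent Bernoullis with mean $\Omega(n)$, Chernoff's inequality~\eqref{eq:cher2} yields $\mathbb{P}\bigl(|\mathcal{F}_i\cap\mathcal{P}^*|<2\beta^2 n/b^2\bigr)\le\exp(-\Omega(n))$, and a union bound over the $t\le n^{2k}$ families still leaves $1-o(1)$ probability. Markov's inequality then gives $|\mathcal{P}^*|\le \beta n/b^2$ with probability at least $1/2$ and $D\le\beta^2 n/b^2$ with probability at least $3/4$, so all three events co-occur with positive probability. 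On that event, $|\mathcal{F}|\le|\mathcal{P}^*|\le\beta n/b^2\le\beta n$ and $|\mathcal{F}\cap\mathcal{F}_i|\ge|\mathcal{F}_i\cap\mathcal{P}^*|-D\ge\beta^2 n/b^2$, as required. The main subtlety is the calibration of $q$: it must be small enough that $\mathbb{E}[D]\asymp q^2 N$ is a small fraction of $\mathbb{E}[q|\mathcal{F}_i\cap\mathcal{P}|]$ (otherwise alteration would wipe out the per-family lower bound), yet large enough to keep $|\mathcal{F}_i\cap\mathcal{P}^*|$ of order $n$ so that Chernoff can absorb an $n^{2k}$-sized union bound; the scaling $q\asymp\beta/(b^2 p^{a}n^{b-1})$ is the sweet spot.
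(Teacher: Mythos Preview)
Your argument is correct and follows essentially the same approach as the paper: invoke Lemmas~\ref{lm:gnp}\eqref{item-III}\eqref{item-IV} and \ref{lm:gnp2} to control $|\mathcal{P}|$, the overlapping pairs, and $|\mathcal{F}_i\cap\mathcal{P}|$, then sample each member of $\mathcal{P}$ with probability $q\asymp\beta/(b^2 p^{a}n^{b-1})$ and delete one set from each surviving overlapping pair. The only (trivial) omission is that the lemma asks for $\mathcal{F}\subseteq\bigcup_{i\in[t]}\mathcal{F}_i$, so you should also discard from $\mathcal{F}$ any $b$-set not lying in this union; this can only decrease $|\mathcal{F}|$ and leaves each $|\mathcal{F}_i\cap\mathcal{F}|$ unchanged.
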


\begin{proof}
In $G=\mathbb{G}^{(k)}(n, p)$, let $\mathcal{T}$ be the set of all ordered $b$-sets on $V$ that span copies of $P_{a,x}$. By Lemma~\ref{lm:gnp}~\eqref{item-III} and~\eqref{item-IV}, Lemma~\ref{lm:gnp2} and the union bound, \aas~the following properties hold simultaneously .
\begin{itemize}
\item $|\F_i\cap \mathcal T|\ge 12\beta p^{a} n^{b}$ for all $i\in [t]$;
\item $|\mathcal T|\le 2 p^{a} n^{b} $;
\item there are at most $4 b^2 p^{2a}n^{2b-1}$ pairs of overlapping members of $\mathcal T$.
\end{itemize}

Next we select a random set $\mathcal{F'}\subset \mathcal T$ by including each member of $\mathcal T$ independently with probability $q:=\beta/(2 b^2 n^{b-1}p^{a})$.
Because of \eqref{eq:cher2} (for (i) and (ii) below) and Markov's inequality (for (iii)), there exists such a family $\mathcal F'$ satisfying the following properties:
\begin{itemize}
\item[(i)] $|\cF_i\cap \F'|\ge 12\beta (q/2) p^{a} n^{b} = 3 \beta^2 n/b^2$ for all $i\in [t]$;
\item[(ii)] $|\mathcal F'|\le 2q |\mathcal T|\le \beta n$;
\item[(iii)] there are at most $ 8b^2 q^2 n^{2b-1} p^{2a}= 2\beta^2 n/b^2$ pairs of overlapping members of $\mathcal F'$.
\end{itemize}
By deleting one ordered $b$-set from each overlapping pair and all ordered $b$-sets not in $\bigcup_{i\in [t]} \F_i$, we obtain a collection $\mathcal F$ of disjoint ordered $b$-sets such that $|\mathcal F|\le \beta n$, and for every $i\in [t]$, 
$|\F_i\cap \F|\ge 3\beta^2 n/b^2 - 2\beta^2 n/b^2= \beta^2 n/b^2$.
Moreover, since $\mathcal{F}\subseteq \mathcal T$, each member of $\F'$ spans a labeled copy of $P_{a,x}$ in $G$.
\end{proof}

\medskip
We now prove a connecting lemma that provides connectors for any two $\ell$-sets with large degree.
Throughout the rest of the paper, let
\[
t_1:=\lceil \ell /(k-\ell) \rceil, \quad t_2:=t_1(k-\ell)-\ell, \quad \text{and} \quad t_3: = 3t_1(k-\ell)-\ell.
\]
Given a $k$-graph $H$, we say that an ordered $t_3$-set $C$ \textit{connects} two ordered $\ell$-sets $A$ and $B$ if $C\cap A=C\cap B=\emptyset$ and the concatenation $ACB$ spans an $\ell$-path.
Note that in this case, $C$ spans a copy of $P_{t_1, t_2}$ in $H$.

\begin{lemma}\label{lm:conn}
Suppose $1\le \ell<k$ and $1/n \ll1/C \ll \beta \ll \eta\ll 1/k$ and $0<\epsilon \le \ell/(3t_1)$.
Let $H'$ and $G$ be two $n$-vertex $k$-graphs on the same vertex set $V$ such that for any $\ell$-set $S\subseteq V$, either $\deg_{H'}(S) = 0$ or $\deg_{H'}(S)\ge \eta {n}^{k-\ell}$ and $G:=\mathbb{G}^{(k)}(n, p)$ satisfies~\eqref{eq:p}.
Then for any set $W\subseteq V$ of size at most $\eta n/3$, a.a.s.~ $H'\cup G$ contains a set $\mathcal{C}$ of disjoint $t_3$-sets such that $V(\cC)\cap W=\emptyset$, $|\cC|\le \beta n$, and for every two disjoint ordered $\ell$-sets $S, S'$ in $V$ with $\deg_{H'}(S), \deg_{H'}(S')\ge \eta n^{k-\ell}$, there are at least $3\beta^3 n$ members of $\mathcal C$ that connect them.
\end{lemma}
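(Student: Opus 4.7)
The plan is to invoke the probabilistic Lemma~\ref{prop:prob} with $a=t_1$ and $x=t_2$ (so that $b=t_3$), feeding it one candidate family per pair of good $\ell$-sets. For every ordered pair $(S,S')$ of disjoint ordered $\ell$-subsets of $V$ with $\deg_{H'}(S),\deg_{H'}(S')\ge \eta n^{k-\ell}$, let $\F_{S,S'}$ be the family of ordered $t_3$-tuples $C=(v_1,\dots,v_{t_3})$ with $\{v_1,\dots,v_{t_3}\}\cap(S\cup S'\cup W)=\emptyset$ for which $(S,v_1,\dots,v_{t_2+\ell})$ spans an $\ell$-path of length $t_1$ in $H'$ and $(v_{t_3-t_2-\ell+1},\dots,v_{t_3},S')$ does likewise. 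Setting $A':=(v_{t_2+1},\dots,v_{t_2+\ell})$ and $B':=(v_{2t_1(k-\ell)-\ell+1},\dots,v_{2t_1(k-\ell)})$, if in addition $C$ spans a labeled copy of $P_{t_1,t_2}$ in $G$---i.e.\ the middle $t_1(k-\ell)+\ell$ vertices of $C$ form a $P_{t_1}$ in $G$ with $\ell$-ends $A'$ and $B'$---then the concatenation $SCS'$ spans an $\ell$-path in $H'\cup G$, i.e.\ $C$ connects $S$ to $S'$.

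The main technical step is to show $|\F_{S,S'}|\ge 24\beta n^{t_3}$. A useful preliminary observation is that every $\ell$-subset of an edge of $H'$ has positive $H'$-degree, and hence by the dichotomy in the hypothesis has $H'$-degree at least $\eta n^{k-\ell}$; consequently every intermediate $\ell$-end reached while greedily extending in $H'$ is automatically large-degree. Starting from $S$, at each of the $t_1$ steps the current $\ell$-end has $\ge \eta n^{k-\ell}$ unordered neighbor $(k-\ell)$-sets in $H'$, and the at most $\eta n/3+O(1)\le \eta n/2$ forbidden vertices ($W\cup S'\cup$ already-used vertices) knock out at most half of them, leaving at least $\tfrac12\eta(k-\ell)!\,n^{k-\ell}$ valid ordered extensions per step. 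Multiplying across the $t_1$ steps from $S$, the $t_1$ steps from $S'$, and the $t_2$ remaining middle positions (unconstrained by the $H'$ structure, each filled in $\ge n/2$ ways) yields
\[
|\F_{S,S'}|\ge c(\eta,k,\ell)\,n^{(t_2+\ell)+(t_2+\ell)+t_2}=c(\eta,k,\ell)\,n^{t_3};
\]
since $\beta\ll \eta, 1/k$ and $c$ depends only on $\eta, k, \ell$, this comfortably beats $24\beta n^{t_3}$.

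Since there are at most $n^{2\ell}\le n^{2k}$ such families, Lemma~\ref{prop:prob} produces a.a.s.\ a collection $\cC$ of at most $\beta n$ pairwise vertex-disjoint ordered $t_3$-sets, each disjoint from $W$ (as it belongs to some $\F_{S,S'}$) and spanning a labeled copy of $P_{t_1,t_2}$ in $G$, with $|\cC\cap \F_{S,S'}|\ge \beta^2 n/t_3^2$ for every pair. Because $\beta\ll 1/k$ and $t_3\le 3k(k-\ell)$ we have $\beta^2/t_3^2\ge 3\beta^3$, so $|\cC\cap\F_{S,S'}|\ge 3\beta^3 n$; and for each such $C$ the $H'$-structure holds by membership in $\F_{S,S'}$ and the $G$-middle by membership in $\cC$, so $SCS'$ is an $\ell$-path in $H'\cup G$, which is to say $C$ connects $S$ and $S'$. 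The principal obstacle is the careful step-by-step count for $|\F_{S,S'}|$: one must track ordered $(k-\ell)$-neighborhoods at each step, subtract the $\le \eta n/2$ forbidden vertices, and confirm that exactly $t_2$ genuinely free middle positions remain after the two $H'$-extensions---after which the conclusion follows mechanically from Lemma~\ref{prop:prob}.
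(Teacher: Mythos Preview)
Your proposal is correct and follows essentially the same approach as the paper: for each pair $(S,S')$ of high-degree $\ell$-sets you greedily extend each end by $t_1$ edges in $H'$ (using the dichotomy hypothesis to guarantee every intermediate $\ell$-end has degree $\ge \eta n^{k-\ell}$), fill the $t_2$ middle positions arbitrarily, and then invoke Lemma~\ref{prop:prob} with $a=t_1$, $x=t_2$ to supply the central $P_{t_1}$ in $G$, exactly as the paper does. The only cosmetic omission is that Lemma~\ref{prop:prob} asks for families of \emph{exactly} $24\beta n^{t_3}$ ordered sets, so you should trim each $\F_{S,S'}$ before applying it (the paper does this explicitly).
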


\begin{proof}
Fix two disjoint ordered $\ell$-sets $S:=(v_1, \dots, v_{\ell})$ and $S':=(w_{\ell}, \dots, w_1)$ such that $\deg_{H'}(S), \deg_{H'}(S')\ge \eta n^{k-\ell}$.
We first claim that we can greedily extend $S$ to an $\ell$-path $v_1, \dots, v_{\ell+t_1(k-\ell)}$ of length $t_1$ in $H'$ such that the new vertices are disjoint from $S'\cup W$ and there are at least $(\eta/2)^{t_1} {n}^{t_1(k-\ell)}$ choices for them.
Indeed, we iteratively extend the path from the current $\ell$-end $T$ by adding $k-\ell$ new vertices.
By the degree assumption, we know that $\deg_{H'}(T)\ge \eta {n}^{k-\ell}$ (in the first step $T=S$).
Since the number of $(k-\ell)$-sets that intersect the existing vertices or $W$ is $\eta n^{k-\ell}/3 + O(n^{k-\ell-1})$, there are at least $\eta {n}^{k-\ell}/2$ choices for the new $k-\ell$ vertices.

Similarly, we can greedily extend $(w_1, \dots, w_{\ell})$ to an $\ell$-path $w_1, \dots, w_{\ell+t_1(k-\ell)}$ of length $t_1$ in $H'$ such that new vertices are disjoint with $\{v_1, \dots, v_{\ell+t_1(k-\ell)}\}\cup W$ and there are at least $(\eta/2)^{t_1} {n}^{t_1(k-\ell)}$ choices for them. 
At last, if $t_2>0$, then we pick $t_2$ arbitrary vertices $\{u_1,\dots, u_{t_2}\}$ that are disjoint from the existing vertices and $W$, and there are at least $n^{t_2}/2$ choices for them.
Note that $t_3=2t_1(k-\ell)+t_2$.
So there are at least $(\eta /2)^{2t_1} n^{t_3}/2 \ge 24\beta n^{t_3}$ choices for the ordered $t_3$-sets
\[
(v_{\ell+1}, \dots, v_{\ell+t_1(k-\ell)}, u_1,\dots, u_{t_2}, w_{\ell+t_1(k-\ell)}, \dots, w_{\ell+1}).
\]
Let $\mathcal C_{S, S'}$ be a collection of exactly $24\beta n^{t_3}$ such ordered $t_3$-sets.
By this definition, if some $C\in \mathcal C_{S, S'}$ spans a labeled copy of $P_{t_1, t_2}$,
then $C$ connects $S$ and $S'$.
We apply Lemma~\ref{prop:prob} to $\mathcal C_{S, S'}$ for all pairs of $S, S'$ such that $\deg_{H'}(S), \deg_{H'}(S')\ge \eta n^{k-\ell}$ and $G=\mathbb{G}^{(k)}(n, p)$, and conclude that \aas~there exists a family $\mathcal C$ of disjoint $t_3$-sets such that $|\mathcal C|\le \beta n$, and for ordered $\ell$-sets $S, S'$ with $\deg_{H'}(S), \deg_{H'}(S')\ge \eta n^{k-\ell}$, there are at least
$\beta^2 n/t_3^2 \ge 3\beta^3 n$ $t_3$-sets that connect them.
In particular, $V(\cC)\cap W=\emptyset$ by our construction.
\end{proof}

\medskip
Given a $k$-graph  $H$, let $W=\{w_1,\dots, w_{k-\ell}\}\subseteq V(H)$. The $W$-absorber is defined as follows.
Let 
\[
t_4 := \lceil (3k-\ell-2)/(k-\ell) \rceil \quad \text{and} \quad t_5:= t_4 (k-\ell) \quad (\text{thus  } 3k-\ell-2\le t_5\le 4k). 
\]
Suppose $X_i, Y_i, Z_i$, $i\in [k-\ell]$, and $T$ are pairwise disjoint ordered sets from $V(H)\setminus W$ satisfying the following properties:
\begin{enumerate}[label=($\roman*$)]
\item $|X_i|=k-1$, $|Y_i|=t_5-k-i+1$, and $|Z_i|=i-1$ for every $i\in [k-\ell]$ and $|T|=\ell$; \label{item:ii}
\item $Q:=X_1 Z_2 Y_1 \ X_2 Z_3 Y_2 \cdots X_{k-\ell-1} Z_{k-\ell} Y_{k-\ell-1}  \ X_{k-\ell} Z_1 Y_{k-\ell} T$ spans a copy of $P_{t_5-1}$; \label{item:c}
\item $Q':=X_1 w_1 Z_1 Y_1 \ X_2 w_2 Z_2 Y_2 \cdots X_{k-\ell} w_{k-\ell} Z_{k-\ell} Y_{k-\ell} T$ spans a copy of $P_{t_5}$. \label{item:iv}
\end{enumerate} 
By definition, $Q$ is a $W$-absorber.
Note that $|Y_i|\ge k-1$ for $i\in [k-\ell]$ by the definition of $t_5$. Let $B_i$ be the ordered set $X_i w_i Z_i Y_i$. Since $|X_i|, |Y_i|\ge k-1$, all the edges of $Q'$ that intersect $\{w_i\}\cup Z_i$ are completely in $B_i$. Furthermore, when counting from the left end, all $X_i$ and $Y_i$ are placed at the same location in $Q$ as in $Q'$, except that $Y_{k-\ell}$ is shifted $k-\ell$ vertices to the right in $Q'$ (thus $Z_2, \dots, Z_{k-\ell}$ are simply place-holders). Consequently, if $H[B_i]\supseteq Q'[B_i]$ for $i\in [k-\ell]$ and $Q$ is a path, then $Q'$ is a path. 

The following is our absorbing lemma.
\begin{lemma}\label{lem:new}
Let $1\le \ell <k$ be integers and suppose $0<\epsilon\le \ell/(3 t_5)$ and $1/n \ll \beta \ll\eta, \alpha, 1/k,1/t_5$.
Let $V$ be a set of $n$ vertices and let $V', U$ be two (not necessarily disjoint) subsets of $V$ such that $|U|\le \eta n/3$.
Let $H$ be a $k$-graph on $V$ such that $\deg_H(v)\ge \alpha n^{k-1}$ for all $v\in V'$, and for all $\ell$-sets $S\subseteq V$, either $\deg_{H}(S)=0$ or $\deg_{H}(S) \ge \eta n^{k-\ell}$.
Suppose $G:=\mathbb{G}^{(k)}(n, p)$ has vertex set $V$ and satisfies \eqref{eq:p}. 
Then $H\cup G$~\aas~contains a family $\cA$ of at most $\beta n$ vertex-disjoint copies of $P_{t_5-1}$ with ends in $\partial_\ell H$ such that $V(\cA) \subseteq V\setminus U$, and every $(k-\ell)$-set $W\subseteq V'$ has at least $\beta^3 n$ $W$-absorbers in $\cA$.
\end{lemma}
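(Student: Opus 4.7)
The plan is to combine a deterministic construction of absorbing structures in $H$ with Lemma~\ref{prop:prob} applied to the random hypergraph $G$. Set $b = \ell + (t_5-1)(k-\ell)$, the number of vertices of $P_{t_5-1}$. For every $(k-\ell)$-subset $W=\{w_1,\ldots,w_{k-\ell}\}$ of $V'$, I construct a family $\cF_W$ of ordered $b$-tuples $Q$ in $V\setminus(W\cup U)$ whose vertices are arranged in the block layout $X_1 Z_2 Y_1 \cdots X_{k-\ell-1} Z_{k-\ell} Y_{k-\ell-1}\, X_{k-\ell} Z_1 Y_{k-\ell}\, T$ of property~\ref{item:c}, such that the following hold in $H$: (a) for each $i\in[k-\ell]$ the edges of $Q'[B_i]$ lie in $H$, where $Q'$ is obtained from $Q$ by inserting the $w_i$'s as in property~\ref{item:iv}; and (b) the last edge of $Q'$, which spans the final $k-\ell$ vertices of $Y_{k-\ell}$ together with $T$, also lies in $H$ (in particular $T\in\partial_\ell H$).

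The key combinatorial estimate is $|\cF_W|\ge 24\beta n^b$ for every such $W$. Fix $W$. For each block $B_i$ I build the $\ell$-path $Q'[B_i]$ greedily in $H$, placing $w_i$ at local position $k$: using $\deg_H(w_i)\ge\alpha n^{k-1}$, there are at least $\alpha n^{k-1}/2$ choices for an $H$-edge through $w_i$ after excluding $W$, $U$, and previously selected vertices (the $O(\eta n)$ forbidden vertices forbid only $O(\eta n^{k-1})\ll\alpha n^{k-1}/2$ edges). Each subsequent edge is rooted at an $\ell$-set already inside an $H$-edge, hence of degree $\ge\eta n^{k-\ell}$ by the hypothesis's dichotomy, providing $\ge\eta n^{k-\ell}/2$ extensions after the same vertex-avoidance. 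This gives $\Omega(n^{t_5-t_2-1})$ embeddings of $Q'[B_i]$, and the $t_2$ free positions of $B_i$ contribute $\Omega(n^{t_2})$ more, for a total of $\Omega(n^{t_5-1})$ per block when $i<k-\ell$. For $B_{k-\ell}$ I additionally extend the $\ell$-path inside $H$ by the final $t_1$ edges of $Q'$ so that condition~(b) holds; each of these extensions contributes $\Omega(n^{k-\ell})$ via the same dichotomy, yielding $\Omega(n^{t_5+\ell-1})$ for the non-$w_{k-\ell}$ vertices of $B_{k-\ell}\cup T$. Multiplying over all $k-\ell$ blocks gives $|\cF_W|\ge \Omega(n^{(k-\ell)(t_5-1)+\ell})=\Omega(n^b)$, and $\beta\ll\alpha,\eta,1/k,1/t_5$ ensures this is at least $24\beta n^b$.

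Next I apply Lemma~\ref{prop:prob} to the at most $\binom{n}{k-\ell}\le n^{2k}$ families $\{\cF_W\}_W$ with parameters $a=t_5-1$ and $x=0$, so that Lemma~\ref{prop:prob}'s $b$ coincides with ours. The hypotheses hold: $\epsilon\le\ell/(3t_5)\le\ell/(3a)$, $1/n\ll 1/C\ll\beta\ll 1/b$, and $p$ satisfies~\eqref{eq:p}. The lemma yields, \aas, a family $\cA\subseteq\bigcup_W\cF_W$ of at most $\beta n$ vertex-disjoint ordered $b$-tuples with $|\cA\cap\cF_W|\ge\beta^2 n/b^2\ge\beta^3 n$ for every $W$, each spanning a labeled copy of $P_{t_5-1}$ in $G$.

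Finally, every $Q\in\cA\cap\cF_W$ is a $W$-absorber: $Q$ spans $P_{t_5-1}$ in $G\subseteq H\cup G$; condition~(a) gives $H[B_i]\supseteq Q'[B_i]$, and by the structural observation following properties~\ref{item:ii}--\ref{item:iv} the set $V(Q)\cup W$ spans $P_{t_5}$ in $H\cup G$ with the same $\ell$-ends as $Q$. The first $\ell$-end of $Q$ is contained in the first edge of $Q'$, which is in $H$ by (a), so it lies in $\partial_\ell H$; the last $\ell$-end is $T$, which is in $\partial_\ell H$ by (b). Vertex-disjointness, $|\cA|\le\beta n$, and $V(\cA)\subseteq V\setminus U$ follow from the construction. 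The main obstacle is the deterministic counting step: one must execute the greedy $H$-extensions inside each $B_i$ while respecting the accumulating vertex-avoidance constraints, and the additional extension needed for condition~(b) must coexist with the choice of the free positions in $B_{k-\ell}$.
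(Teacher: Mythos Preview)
Your proposal is correct and follows essentially the same two-step strategy as the paper's proof: for each $(k-\ell)$-set $W\subseteq V'$, greedily build in $H$ the edges of $Q'$ lying inside each block $B_i$ (with $w_i$ at local position $k$), count $\Omega(n^b)$ such ordered $b$-tuples, and then apply Lemma~\ref{prop:prob} with $a=t_5-1$, $x=0$ to obtain $\cA$. The only cosmetic difference is that the paper extends each block to a full $t_4$-edge $H$-path and then truncates, whereas you stop once $Q'[B_i]$ is secured and fill the remaining $t_2=t_1(k-\ell)-\ell$ tail positions of $Y_i$ freely; since those positions lie only in straddling edges that coincide with edges of $Q$, the structural remark after~\ref{item:iv} still applies and both counts give the required $24\beta n^b$.
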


\begin{proof}
For each $W=\{w_1,\dots, w_{k-\ell}\} \subseteq V'$, we will find $W$-absorbers from $V\setminus U$ satisfying~\ref{item:ii} -- ~\ref{item:iv}.
We achieve this in two steps.
In the first step, for each $i\in [k-\ell]$, we will find a path $Q_i$ of length $t_4$ with $V(Q_i)=  \{v_1, \dots, v_{t_5+\ell} \} \subseteq V\setminus U$ such that $v_k = w_i$, and there are at least $\frac{\alpha}2 (\frac{\eta}2)^{t_4 -1} n^{t_5+\ell-1}$ choices for $V(Q_i)$. Indeed, we first choose an unordered set $\{v_1, \dots, v_{k-1}\} \in N_H(w_i)$. Since $\deg_H(w_i)\ge \alpha n^{k-1}$ and at most $|U| + \ell+ t_5(k-\ell) \le \eta n/2$ vertices are either in $U$ or used in this step, there are at least $\frac{\alpha}2  n^{k-1}$ choices for $\{v_1, \dots, v_{k-1}\}$. Next, let $S=\{v_{k- \ell+1}, \dots, v_k\}$. Since $\deg_{H}(S)>0$, we have $\deg_{H}(S) \ge \eta n^{k-\ell}$. Hence we can choose an unordered set $\{v_{k+1}, \dots, v_{2k-\ell}\} \in N_H(S)$ while avoiding $U$ and the vertices already used in this step.
There are at least $\frac{\eta}2  n^{k-\ell}$ choices. 
We repeat this to obtain the desired path $Q_i$ and there are at least $\frac{\alpha}2 (\frac{\eta}2)^{t_4 -1} n^{t_5+\ell-1}$ choices for $V(Q_i)$ as an  ordered set.
Let $B_i$ be the ordered set $\{v_1, \dots, v_{t_5} \}$. It follows that there are at least $\frac{\alpha}2 (\frac{\eta}2)^{t_4 -1} n^{t_5-1}$ choices for $B_i$. Now let $A= B_1 \cdots B_{k-\ell-1} V(Q_{k-\ell})$. We have at least $((\frac{\alpha}2) (\frac{\eta}2)^{t_4-1})^{k-\ell} n^{\ell+(t_5-1)(k-\ell)} \ge 24 \beta n^{\ell+(t_5-1)(k-\ell)}$ choices for~$A$.

Now we proceed to the second step. For each $ i\in [k-\ell]$, recall that $V(Q_i)=  \{v_1, \dots, v_{t_5+\ell} \}$. Define (ordered) sets
\[
X_i= \{v_1, \dots, v_{k-1} \},\quad Z_i=\{v_{k+1}, \dots, v_{k+i-1} \}, \quad \text{and} \quad Y_i = \{v_{k+i}, \dots, v_{t_5} \}.
\]
In addition, let $T=\{v_{t_5+1}, \dots, v_{t_5+\ell} \}$ from $Q_{k-\ell}$.
It is clear that $X_i, Y_i, Z_i$ and $T$ satisfy~\ref{item:ii}.
Recall that $B_i = X_i w_i Z_i Y_i$. For $Q'$ defined in \ref{item:iv}, our first step already provides the edges of $Q'[B_i]$ for $i\le k-\ell-1$ and the edges of $Q'[B_{k-\ell}\cup T]$. 
Following the discussion right after \ref{item:iv}, we achieve both \ref{item:c} and \ref{item:iv} if $Q$ is a path. To this end, we use the edges of $G$. 
Let $\mathcal F_W$ be the family of $24 \beta n^{\ell + (t_5-1) (k-\ell)}$ copies of $A$, each re-ordered as in $Q$.
We apply Lemma~\ref{prop:prob} to $G$ with $x=0$, families $\mathcal F_{W}$ for all ordered $(k-\ell)$-sets $W\subseteq V'$, and conclude that \aas~there exists a collection $\cA$ of at most $\beta n$ vertex-disjoint copies of $P_{t_5-1}$ such that for every $(k-\ell$)-set $W\subseteq V'$, at least $\beta^3 n$ members of $\cA$ are from $\mathcal F_{W}$, and thus are
$W$-absorbers.
At last, because of the first step, both $\ell$-ends of these paths are in $\partial_{\ell} H$.
\end{proof}

In the proof of Theorem~\ref{main} we need a lemma to cover most of the vertices with constantly many paths.
This is done in the following lemma.
In the proofs of the following lemma and Theorem~\ref{main}, we use the trick of \emph{multi-round exposure}, namely, in each of the steps later, we expose one or several independent copies of the binomial random hypergraph, each of them with edge probability a constant fraction of the original edge probability.

\begin{lemma}\label{lm:almost}
Let $1\le \ell < k$, and suppose $1/n \ll 1/C \ll \zeta\ll \alpha\ll 1/k$ and $0<\epsilon \le \zeta^3\ell/6$.
Suppose $V$ is a set of $n$ vertices and $V_0\subseteq V$ with $|V_0|\le \alpha n$, and furthermore, when $\ell=1$, suppose that $V_0=\emptyset$.
Suppose $G:=\mathbb{G}^{(k)}(n, p)$ on $V$ satisfying~\eqref{eq:p}.
Let $L$ be an $\ell$-graph on $V\setminus V_0$ with $|E(L)|\ge \alpha n^\ell$.
Then \aas~$G$ contains a set $\mathcal P$ of at most $2\zeta^3 n$ vertex-disjoint $\ell$-paths such that their ends are in $L$, $V_0\subseteq V(\mathcal P)$ and $|V\setminus V(\mathcal P)|\le 2\zeta n$.
\end{lemma}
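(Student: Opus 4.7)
Set $a=\lceil 1/((k-\ell)\zeta^3)\rceil$, so each $\ell$-path $P_a$ has $b=a(k-\ell)+\ell\le 2/\zeta^3$ vertices, and aim to build $N=\lceil \zeta^3 n\rceil$ pairwise vertex-disjoint such paths with ends in $L$, giving total coverage $Nb\ge(1-2\zeta)n$ as required. The construction is iterative, applying Lemma~\ref{lm:gnp}~\eqref{item-I} in each step to the remaining vertex set $U_i = V\setminus V(\mathcal P_{i-1})$ to extract one $\ell$-path of length $a$.

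To maintain the hypothesis $|L\cap\binom{R}{\ell}|\ge \gamma n^\ell$ of Lemma~\ref{lm:gnp}~\eqref{item-I} throughout the $N$ iterations, I first pick (by a uniform random sample on $V\setminus V_0$ together with a Chernoff bound) a ``reserve pool'' $S\subseteq V\setminus V_0$ of size $|S|=\alpha n/2$ satisfying $|L\cap\binom{S}{\ell}|\ge (\alpha/2)^{\ell+1}n^\ell$. At iteration $i$ I set $R=S\cap U_i$ and $V^*=U_i\setminus S$: vertices of $S$ are used only as $\ell$-ends of paths, while interior vertices come from $V^*$. Since only $2\ell$ vertices of $S$ are consumed per iteration, at most $2\ell N = O(\zeta^3 n)$ are removed across all $N$ iterations, so $|L\cap \binom{R}{\ell}|$ stays $\Omega(n^\ell)$; meanwhile $|V^*|\ge n/2$ throughout, and the hypotheses of Lemma~\ref{lm:gnp}~\eqref{item-I} are satisfied.

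The additional requirement $V_0\subseteq V(\mathcal P)$ (for $\ell\ge 2$) is the main obstacle. My plan is to partition $V_0$ into $N$ chunks $V_0^1,\dots,V_0^N$ of size at most $\lceil \alpha/\zeta^3\rceil$, so each chunk fits in the interior of a single $P_a$, and to force $V_0^i\subseteq V(P_i)$. Since Lemma~\ref{lm:gnp}~\eqref{item-I} as stated does not prescribe which specific vertices appear in the interior, a refinement is needed: I expect to derive it by a direct Janson estimate on the family of ordered $b$-tuples with the $V_0^i$-vertices placed at prescribed interior positions, ends in $L\cap \binom{R}{\ell}$, and remaining interior in $V^*$. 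The relevant ratio $\mathbb E^2/\Delta$ scales as a polynomial in $n$ when $\ell$ is large relative to $|V_0^i|$ and $a\eps$, and the tight hypothesis $\eps\le \zeta^3\ell/6$ is used here to keep the exponent positive.

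\emph{Main obstacle.} For $\ell\ge 3$ the Janson estimate is comfortable and the construction proceeds routinely. For $\ell=2$ the ratio $\mathbb E^2/\Delta$ is only a constant, so Janson alone does not give an a.a.s.~guarantee of covering each prescribed chunk. To handle this case, I plan to perform the $V_0$-insertion in sub-steps of $k-\ell$ vertices at a time, each sub-step being a constant-size local modification of the evolving path whose a.a.s.~existence in $G$ follows from a careful second-moment computation. Interleaving this local $V_0$-absorption with the iterative extraction of long paths, while preserving the $L$-density in $R$ and the vertex-disjointness of $\mathcal P$, will constitute the main technical work of the proof.
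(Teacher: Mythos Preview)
Your plan has a fundamental gap in the $V_0$-covering step, and it is not specific to $\ell=2$. With $c:=\lceil\alpha/\zeta^3\rceil$ vertices prescribed at fixed interior positions of $P_a$, the expected number of such labelled paths in $\mathbb G^{(k)}(n,p)$ is $\Theta(n^{b-c}p^a)=\Theta(n^{\ell-c-a\epsilon})$ at the lower end of \eqref{eq:p}. Since $\zeta\ll\alpha$ forces $c=\lceil\alpha/\zeta^3\rceil\gg k>\ell$, this expectation is $o(1)$ for \emph{every} fixed $\ell$, so a.a.s.\ no such path exists at all. Your assertion that ``for $\ell\ge 3$ the Janson estimate is comfortable'' is therefore incorrect: the obstruction is already the first moment, not the ratio $\mathbb E^2/\Delta$. (A secondary issue: your reserve has $|S|=\alpha n/2\gg 2\zeta n$; since path interiors avoid $S$, after your iteration roughly $\alpha n/2$ vertices of $S$ remain uncovered, violating $|V\setminus V(\mathcal P)|\le 2\zeta n$.)

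The paper avoids prescribing $V_0$ altogether in Phase~1. It takes a reserve $R$ of size only $\zeta n$, lets the long paths draw their interiors freely from $V\setminus R\supseteq V_0$, and iterates until at most $\zeta^3 n$ vertices of $V\setminus R$ remain uncovered; the residual set $V''$ (which contains whatever is left of $V_0$) then satisfies $|V''|\le\zeta^3 n$. In Phase~2 each $v\in V''$ receives its own short path $w_1\cdots w_{k-1}\,v\,w_k\cdots w_{t_6(k-\ell)+\ell-1}$ of length $t_6=\lfloor(k-1)/(k-\ell)\rfloor+1$ with all $w_j$ taken from the unused part of $R$, so that $v$ lies in \emph{every} edge of the path. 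The key observation is that this is equivalent to finding a $(k-1)$-uniform $(\ell-1)$-path among the $w_j$'s in the link $N_{G}(v)$; because $v\in V\setminus R$ while all $w_j\in R$, the random edges relevant to distinct $v$'s are pairwise disjoint, hence independent, and Lemma~\ref{lm:gnp2} (applied with $k-1,\ell-1$ in place of $k,\ell$, using $p'\ge n^{-((k-1)-(\ell-1))-2\epsilon}$ and $2\epsilon\le(\ell-1)/(3t_6)$) gives failure probability $\exp(-n^{1/3})$ per vertex, which survives a union bound over $|V''|\le\zeta^3 n$. This link-reduction, together with exposing $G$ in two rounds, is precisely the idea your ``sub-steps / second-moment'' plan is missing.
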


\begin{proof}

Since $|L|\ge \alpha n^\ell$,
by averaging, there exists a set $R\subseteq V\setminus V_0$ of size $\zeta n$ such that $ | L \cap \binom{R}{\ell} | \ge \alpha |R|^\ell /2$.

We find our path cover in two phases.
In the first phase, we use relatively long paths with ends from $R$ to cover most of the vertices of $V$. In the second phase, we greedily cover the remaining vertices of $V_0\setminus R$ with short paths.
We therefore expose $G$ in two rounds such that $G=G_1\cup G_2$, where each $G_i$ is $\mathbb{G}^{(k)}(n,p')$ with $(1-p')^{2} = 1-p$. Thus $p' >p/2 > n^{-(k-\ell)-2\epsilon}$ when $\ell\ge 2$.

We start with Phase 1. Let $s$ be the smallest integer such that $s\ge 1/\zeta^3$ and $s\equiv \ell \mod{(k-\ell)}$, and let $s_1=(s-\ell)/(k-\ell)$. Since $\epsilon \le \zeta^3 \ell/3$, we have $2\eps \le \ell / (3s_1)$. 
By Lemma~\ref{lm:gnp}~\eqref{item-I}, \aas~for all $V^*\subseteq V\setminus R, R'\subseteq R$ satisfying $|V^*|\ge \zeta^3 n$, $|R'|\ge |R|/2$ and $|L\cap \binom{R'}{\ell}| \ge (\alpha/3) |R|^\ell$,
$G_1=\mathbb{G}^{(k)}(n,p')$ contains a copy of $P_{s_1}$ whose $\ell$-ends are in $L\cap \binom{R'}{\ell}$ and other vertices are from $V^*$.
Owing to this property, we repeatedly construct copies of $P_{s_1}$ by letting $V^*$ be the set of uncovered vertices of $V'$ and letting $R'$ be the set of uncovered vertices of $R$, as long as $|V^*|\ge \zeta^3 n$.
This is possible because we construct at most $\zeta^3 n$
vertex-disjoint copies of $P_{s_1}$, which consume at most $2\ell \zeta^3 n$ vertices from $R$.
During the process, at least $|R| - 2\ell \zeta^3 n\ge |R|/2$ vertices of $R$ are available and by our assumption, they span at least $\alpha |R|^\ell /2   - 2\ell \zeta^3 n\cdot |R|^{\ell-1} \ge \alpha |R|^\ell /3$ edges of $L$.
Let $\mathcal{P}_{1}$ denote the set of the paths obtained in this phase.

Note that when $\ell=1$, since $V_0=\emptyset$ and $|V\setminus V(\mathcal P_1)|\le |R|+\zeta^3 n\le 2\zeta n$, we are done by letting $\mathcal P=\mathcal P_1$.

Now we proceed to Phase 2 and assume that $\ell\ge 2$.
Let $V''$ be the set of uncovered vertices in $V\setminus R$ and $R'=R\setminus V(\mathcal{P}_{1})$.
Note that $|V''|\le \zeta^3 n$ and $|R'|\ge |R| - 2\ell \zeta^3 n \ge |R|/2$, and $|L\cap \binom{R'}{\ell}| \ge (\alpha/3) |R|^\ell$.
Using the edges of $G_2=\mathbb{G}^{(k)}(n,p')$, we will greedily put vertices $v\in V''$ into vertex-disjoint $\ell$-paths $w_1 \cdots w_{k-1} v w_{k} \cdots w_{t_6(k-\ell)+\ell-1}$ of length $t_6:=\lfloor ({k-1})/({k-\ell}) \rfloor+1$ such that all the vertices other than $v$ are from $R'$ and both $\ell$-ends are in $L$.
Note that $v$ is in every edge of the path but in neither of the $\ell$-ends.


For any $v\in V''$,  let $G_v$ be the edges of $G_2$ that contain $v$ and have their other $k-1$ vertices from $R'$. 
For distinct vertices $u, v\in V''$, the possible edges appear in $G_u$ independently of the possible edges that can appear in $G_v$.
Suppose we consider $v\in V''$ after covering some vertices of $V''$ by $\ell$-paths.
To this end, we expose $G_v$.
Let $R''$ be the set of unused vertices in $R'$.
We have $|R''|\ge |R'| - |V''| (t_6(k-\ell)+\ell)\ge |R'| - 2k\zeta^3 n \ge |R|/3$ and $|L\cap \binom{R''}{\ell}| \ge |L\cap \binom{R'}{\ell}| - 2k\zeta^3 n |R'|^{\ell-1} \ge (\alpha/4) |R|^\ell$.
We choose two disjoint $\ell$-sets from $L\cap \binom{R''}{\ell}$ and $t_6(k-\ell) - \ell-1$ vertices from $R''$ forming an ordered $(t_6(k-\ell)+\ell - 1)$-set $(w_1, \dots, w_{t_6(k-\ell)+\ell-1})$ -- there are 
\[
\frac{\alpha |R|^\ell}{4}\cdot \left(\frac{\alpha |R|^\ell}{4} - \ell |R''|^{\ell-1} \right) \cdot 
\left(\frac{\zeta n}{4}\right)^{t_6(k-\ell)-\ell -1} \ge \alpha^3 (\zeta n)^{t_6(k-\ell)+\ell - 1}
\]
such sets. 
We observe that $w_1 \dots w_{k-1} v w_{k} \dots w_{t_6(k-\ell)+\ell-1}$ spanning a copy of $P_{t_6}$  in $G_v$ is equivalent to $w_1 \dots w_{t_6(k-\ell)+\ell-1}$ spanning a $(k-1)$-uniform $(\ell-1)$-path in $N_{G_v}(v)$.
Since $p'\ge n^{(k-1)-(\ell-1) - 2\epsilon}$ and $2\epsilon \le (\ell-1)/(3 t_6)$, we can apply Lemma~\ref{lm:gnp2} to $\alpha^3 (\zeta n)^{t_6(k-\ell)+\ell - 1}$ ordered $(t_6(k-\ell)+\ell - 1)$-sets, and conclude that $G_v$ contains a desired $\ell$-path with probability at least $1-\exp(-n^{1/3})$. 
By the union bound, with probability at least $1-|V''|\exp(-n^{1/3}) =1-o(1)$, we can put all the vertices of $V''$ into vertex-disjoint $\ell$-paths of length $t_6$ by using the vertices of $R$ such that all the $\ell$-ends are in $L$.
This finishes Phase 2.
Let $\mathcal{P}_{2}$ denote the family of the $\ell$-paths found in this phase.
Let $\mathcal{P}:=\mathcal{P}_1\cup \mathcal{P}_{2}$ and note that $|\mathcal P|\le 2\zeta^3 n$.
By construction, all the $\ell$-ends of the paths in $\mathcal P$ are in $L$.
Since $V\setminus V(\mathcal P)\subseteq R$, we have $V_0\subseteq V(\mathcal P)$ and $|V\setminus V(\mathcal P) |\le |R|\le 2\zeta n$.
\end{proof}

\section{Proof of Theorem~\ref{main}}

In this section we prove Theorem~\ref{main}.  We essentially follow the procedure mentioned in Section~1.3 but need additional work.
We first apply Lemma~\ref{lem:shave} and obtain a spanning subgraph $H'$ of $H$. Let $V_*$ be the set of vertices of $H'$ with high degree.
Following the procedure outlined in Section~1.3, we obtain an absorbing path $P_{abs}$, a set $\cC_1$ of connectors and a set $\cP$ of paths that cover almost all the vertices.
A natural attempt is to use the connectors in $\cC_1$ to connect the paths in $\cP$ and $P_{abs}$ to obtain an almost spanning cycle and then absorb the remaining vertices of $\cC_1$ by $P_{abs}$.
On the other hand, when applying Lemma~\ref{lem:new} to $H'$, we can only absorb vertices in $V_*$. Therefore we need to have $V(\cC_1)\subseteq V_*$.
However, we cannot strengthen Lemma~\ref{lm:conn} by asking $V(\cC_1)\subseteq V_*$ because for a given $\ell$-set in $V_*$, it is possible that all its neighbors intersect $V\setminus V_*$ (recall that $\deg_{H'}(S)\ge \eta^2 n^{k-\ell}$ and $|V\setminus V_*|\le \eta n$). Therefore, this naive attempt fails. 

To fix it, we ``shave'' $H'$ again, namely, applying Lemma~\ref{lem:shave} to $H'[V_*]$, and obtain a spanning $k$-graph $H_*$ on $V_*$. We thus apply Lemma~\ref{lm:conn} to $H_*$ and obtain $\cC_1$ such that $V(\cC_1)\subseteq V_*$ and $\cC_1$ can connect any two $\ell$-sets in $L:=\partial_{\ell} H_*$. 
In order to obtain $P_{abs}$, we apply Lemma~\ref{lem:new} to $H'$ obtaining a family $\cA$ of absorbers and apply Lemma~\ref{lm:conn} to $H'$ obtaining another set $\cC_2$ of connectors. After connecting $\cA$ into $P_{abs}$, unused members of $\cC_2$ will be discarded (and the vertices in these members will be covered in a later step).

Below are the details of our proof.
Let $1/n\ll 1/C \ll \zeta \ll \beta \ll \eta \ll\alpha, 1/k$  and $0<\epsilon \le \zeta^3\ell/12$.
Write $V=V(H)$.
Let $\bigcup_{i\in [4]} G_i = \mathbb{G}^{(k)}(n,p)$ such that each $G_i$ is $\mathbb{G}^{(k)}(n,p')$ and $(1-p')^{4} = 1-p$.
In particular, $p' >p/4 > n^{-(k-\ell)-2\epsilon}$ if $\ell\ge 2$, and $p' >p/4 \ge (C/4) n^{-(k-1)}$ if $\ell=1$.
When we apply Lemmas~\ref{lm:conn}, ~\ref{lem:new} and~\ref{lm:almost}, we apply them with $p'$ in place of $p$ and $2\epsilon$ in place of $\epsilon$.

\medskip
\noindent\textit{Step 1. Shave $H$ twice.}
We define two subgraphs $H'$ and $H_*$ of $H$ as follows.
If $\ell=1$, let $H'=H_* =H$, $V_*= V$,  and $L=V$.
If $\ell\ge 2$, then we apply Lemma~\ref{lem:shave} to $H$ and obtain a subgraph $H'$ with the following properties:
\begin{itemize}
\item there exists $V_0\subseteq V$ such that $|V_0| \le 3k\eta^2 n/\alpha\le \eta n$ and $\deg_{H'}(v)\ge 2\alpha n^{k-1}/3$ for all $v\in V\setminus V_0$;
\item for every $\ell$-set $S\subseteq V$, either $\deg_{H'}(S)=0$ or $\deg_{H'}(S) \ge \eta^2 n^{k-\ell}$.
\end{itemize}
%
%
Let $V_*=V\setminus V_0$ and $n_*:=|V_*| \ge (1-\eta )n$.
We have $\delta_1(H'[V_*]) \ge 2\alpha n^{k-1}/3 - |V_0| n^{k-2} \ge \alpha n_*^{k-1}/2$.
Apply Lemma~\ref{lem:shave} again to $H'[V_*]$ and obtain a subgraph $H_*$ on $V_*$ such that
\begin{itemize}
\item $e(H_*)\ge \alpha n_*^{k}/(4k) $,
\item for every $\ell$-set $S\subseteq V_*$, either $\deg_{H_*}(S)=0$ or $\deg_{H_*}(S) \ge \eta^2 n_*^{k-\ell}$.
\end{itemize}
Let $L=\partial_\ell H_*$. We have 
\begin{align}\label{eq:L}
|L|\ge \frac{\binom k{\ell} e(H_*)}{\binom {n-\ell}{k-\ell} }\ge \frac{ \binom k{\ell} \frac{\alpha}{4k} n_*^k}{n_*^{k-\ell}}  \ge \frac{\alpha}4 n_*^{\ell}.
\end{align}

\medskip
\noindent\textit{Step 2. Build connectors $\cC_1$ and $\cC_2$.}
We obtain $\cC_1$ and $\cC_2$ by applying Lemma~\ref{lm:conn} twice. 
First, we apply Lemma~\ref{lm:conn} to $H_*\cup G_{1}[V_*]$ with $W=\emptyset$, $\eta^2$ (in place of $\eta$) and $\zeta$ (in place of $\beta$), and conclude that $H_*\cup G_{1}[V_*]$ \aas~contains a set $\cC_1$ of disjoint $t_3$-sets such that $V(\cC_1)\subseteq V_*$, $|\cC_1|\leq \zeta n$ and for any two disjoint ordered $\ell$-sets $S, S'$ in $L$, there are at least $3\zeta^{3} n$ members of $\cC_1$ connecting them. 
Second, we apply Lemma~\ref{lm:conn} to $H'\cup G_{2}$ with $W=V(\cC_1)$, $\eta^2$ (in place of $\eta$) and $\beta$, and conclude that $H'\cup G_{2}$ \aas~contains a set $\cC_2$ of disjoint $t_3$-sets such that $V(\cC_2)\subseteq V\setminus V(\cC_1)$, $|\cC_2|\leq \beta n$, and for any two disjoint ordered $\ell$-sets $S, S'$ in $\partial_\ell H'$, there are at least $3\beta^3 n$ members of $\cC_2$ connecting  them. 


\medskip
\noindent\textit{Step 3. Build an absorbing path.}
Note that $|V(\cC_1\cup \cC_2)| \le 2\beta n \cdot t_3< 6k\beta n$ (as $t_3< 3k$).
We apply Lemma~\ref{lem:new} to $H'\cup G_3$ with $V'= V_*$, $U= V(\cC_1\cup \cC_2)$, 
$2\alpha/3$ (in place of $\alpha$), $\eta^2$ (in place of $\eta$) and $\beta^3$ (in place of $\beta$). 
Then \aas~there exists a collection $\cA$ of at most $\beta^3 n$ vertex-disjoint copies of $P_{t_5 -1}$ such that for every $(k-\ell)$-set $S\subseteq V_*$, there are at least $\beta^9 n$ $S$-absorbers in $\cA$. Note that each member of $\cA$ contains $(t_5 -1)(k- \ell) + \ell \le 4k^2$ vertices.
Moreover, all the members of $\cA$ have their $\ell$-ends in $ \partial_\ell H'$ and $V(\cA)\subseteq V\setminus V(\cC_1\cup \cC_2)$.
Next, we pick two disjoint $\ell$-sets $E_1, E_2\in L$, which are also disjoint from $V(\cA)\cup V(\cC_1\cup \cC_2)$.
This is possible because $|V(\cA)\cup V(\cC_1\cup \cC_2)|\le 4 k^2 \beta^3 n + 6k\beta n\le 7k\beta n$ and $|L|\ge \alpha n_*^\ell/4$.
Finally, we use the members of $\cC_2$ to connect the members of $\cA$ and $E_1, E_2$ to an $\ell$-path $P_{abs}$ with ends $E_1$ and $E_2$, which is possible because all the absorbers have ends in $\partial_\ell H'$ and $|\cA|\le \beta^3 n$.

\medskip
\noindent\textit{Step 4. Cover most of the remaining vertices.}
Let $V'=V\setminus (V(P_{abs})\cup V(\cC_1))$. 
Note that $|V(P_{abs})\cup V(\cC_1)|\le 7k\beta n + 2\ell  \le 8k\beta n$. 
Let $L':=L[V_*\setminus (V(P_{abs})\cup V(\cC_1))]$. 
By~\eqref{eq:L}, we have
\[
|L'| \ge \alpha n_*^\ell/4 - |V(P_{abs})\cup V(\cC_1)|\cdot n_*^{\ell-1} \ge \alpha n_*^\ell/5 \ge \alpha |V'|^\ell/6.
\]
So we can apply Lemma~\ref{lm:almost} with $V'$ (in place of $V$), $V_0$, $L'$ (in place of $L$), $\alpha/6$ (in place of $\alpha$), $G=G_4$, and \aas~obtain a collection $\cP$ of at most $2\zeta^3 n$ vertex-disjoint paths with ends in $L$, which leaves a set $W$ of at most $2\zeta n$ vertices in $V'\setminus V_0\subseteq V_*$ uncovered.
Next, we connect $P_{abs}$ and the paths in $\mathcal{P}$ by the connectors in $\cC_1$ and denote the resulting $\ell$-cycle by $Q$.
This is possible because the ends of these paths are in $L$, and $1 + |\mathcal{P}|\le 1 + 2\zeta^3 n \le 3\zeta^3 n$.

\medskip
\noindent\textit{Step 5. Finish the Hamiltonian $\ell$-cycle.}
Let $X=V\setminus V(Q)$. The construction of $Q$ implies that $|X|\in (k-\ell)\mathbb N$, $X \subseteq W\cup V(\cC_1)\subseteq V_*$ and $|X|\le 2\zeta n+t_3 \zeta n \le 2 t_3 \zeta n$ (because $t_3\ge 2\ell\ge 2$).
We arbitrarily partition $X$ into disjoint sets of size $k-\ell$.
By the definition of $\cA$, every $(k-\ell)$-set $S\subseteq X$ has at least $\beta^9 n$ $S$-absorbers in $\mathcal A$.
Since each member of $\cA$ is a subpath of $Q$ and $2 t_3 \zeta \le \beta^9$, we can absorb all these $(k-\ell)$-sets greedily and obtain the desired Hamiltonian $\ell$-cycle.

\smallskip
Each of Steps 2, 3 and 4 can be done with probability $1-o(1)$ (while Steps 1 and 5 are deterministic). Hence, by the union bound, \aas~we complete all the steps and obtain a Hamiltonian $\ell$-cycle of~$H$.

\section*{Acknowledgment} \nonumber

We would like to thank Wiebke Bedenknecht, Yoshiharu Kohayakawa and Guilherme Mota for discussions at an early stage of this project.
We are also grateful to two anonymous referees for many helpful comments. 
In particular, we are in debt to a referee who showed us how to obtain an absorbing lemma without using the regularity method. This and other comments helped to simplify our proof and greatly improved the presentation of the paper.

\begin{bibdiv}
\begin{biblist}


\bib{BTW}{article}{
   author = {Balogh, J.},
   author = {Treglown, A.},
   author = {Wagner, A.~Z.},
    title = {Tilings in randomly perturbed dense graphs},
  journal={Combinatorics, Probability and Computing}, publisher={Cambridge University Press}, year={2019}, 
  volume={28},
  pages={159-176}
  }

\bib{BMSSS1}{article}{
Author = {Bastos, J.O.},
author={Mota, G. O.},
author={Schacht, M.},
author={Schnitzer, J.},
author= {Schulenburg, F.},
	Date-Added = {2017-02-14 19:33:21 +0000},
	Date-Modified = {2017-02-14 19:33:21 +0000},
	volume={31},
	year={2017},
	pages={2328-2347},
	Journal = {SIAM Journal on Discrete Math.},
	Title = {Loose Hamiltonian cycles forced by large $(k-2)$-degree - approximation version},
	Url = {http://dx.doi.org/10.1137/16M1065732}
}

\bib{BMSSS2}{article}{
	Author = {Bastos, J.O.},
	author={Mota, G. O.},
	author={Schacht, M.},
	author={Schnitzer, J.},
	author= {Schulenburg, F.},
	title = {Loose Hamiltonian cycles forced by large (k-2)-degree - sharp version},
	journal = {Contributions to Discrete Mathematics},
	volume = {13},
	number = {2},
	year = {2018}}

\bib{BHKM}{article}{
	Author = {Bedenknecht, W.},
	author={Han, J.},
	author={Kohayakawa, Y.},
	author={Mota, G. O.},
	Date-Added = {2017-02-14 19:33:21 +0000},
	Date-Modified = {2017-02-14 19:33:21 +0000},
	JOURNAL = {Random Structures \& Algorithms, to appear},
	Title = {Powers of tight Hamilton cycles in randomly perturbed hypergraphs}}
	
\bib{BeDuFr16}{article}{
	author = {Bennett, P.},
	author = {Dudek, A.},
	author =   {Frieze, A.},
	title = {Adding random edges to create the square of a Hamilton cycle},
	journal = {ArXiv e-prints},
	archivePrefix = {arXiv},
	eprint = {1710.02716},
	primaryClass = {math.CO},
	keywords = {Mathematics - Combinatorics},
	year = {2017},
	adsurl = {http://adsabs.harvard.edu/abs/2016arXiv161106570B},
	adsnote = {Provided by the SAO/NASA Astrophysics Data System}
}

\bib{BFM}{article}{
	AUTHOR = {Bohman, Tom},
	author={Frieze, Alan},
	author={Martin, Ryan},
	TITLE = {How many random edges make a dense graph {H}amiltonian?},
	JOURNAL = {Random Structures \& Algorithms},
	FJOURNAL = {Random Structures \& Algorithms},
	VOLUME = {22},
	YEAR = {2003},
	NUMBER = {1},
	PAGES = {33--42},
	ISSN = {1042-9832},
	MRCLASS = {05C80 (05C45 60C05)},
	MRNUMBER = {1943857},
	MRREVIEWER = {Bert Fristedt},
	DOI = {10.1002/rsa.10070},
	URL = {http://dx.doi.org/10.1002/rsa.10070},
}

\bib{BHKMPP}{article}{
	Author = {B\"ottcher, J.},
        author={Han, J.},
	author={Kohayakawa, Y.},
        author={Montgomery, R.},
	author={Parczyk, O.},
	author={Person, Y.},
	Date-Added = {2017-02-14 19:33:21 +0000},
	Date-Modified = {2017-02-14 19:33:21 +0000},
	Journal = {Random Structures \& Algorithms, to appear},
	Title = {Universality of bounded degree spanning trees in randomly perturbed graphs}}

\bib{BMPP}{article}{
	Author = {B\"ottcher, J.},
	author={Montgomery, R.},
	author={Parczyk, O.},
	author={Person, Y.},
	Date-Added = {2017-02-14 19:33:21 +0000},
	Date-Modified = {2017-02-14 19:33:21 +0000},
	Journal = {preprint},
	Title = {Embedding spanning bounded degree subgraphs in randomly perturbed graphs}}

\bib{BHS}{article}{
	Author = {Bu{\ss}, E.},
	author={H{\`a}n, H.},
	author={Schacht, M.},
	Date-Added = {2017-02-14 19:33:33 +0000},
	Date-Modified = {2017-02-14 19:33:33 +0000},
	Doi = {10.1016/j.jctb.2013.07.004},
	Fjournal = {Journal of Combinatorial Theory. Series B},
	Issn = {0095-8956},
	Journal = {J. Combin. Theory Ser. B},
	Mrclass = {05C65 (05C45)},
	Mrnumber = {3127586},
	Mrreviewer = {Martin Sonntag},
	Number = {6},
	Pages = {658--678},
	Title = {Minimum vertex degree conditions for loose {H}amilton cycles in 3-uniform hypergraphs},
	Url = {http://dx.doi.org/10.1016/j.jctb.2013.07.004},
	Volume = {103},
	Year = {2013},
	Bdsk-Url-1 = {http://dx.doi.org/10.1016/j.jctb.2013.07.004}}


\bib{CzMo}{article}{
	Author = {Czygrinow, A.},
	author={Molla, T.},
	Date-Added = {2017-02-14 19:33:33 +0000},
	Date-Modified = {2017-02-14 19:33:33 +0000},
	Doi = {10.1137/120890417},
	Fjournal = {SIAM Journal on Discrete Mathematics},
	Issn = {0895-4801},
	Journal = {SIAM J. Discrete Math.},
	Mrclass = {05D40 (05C65)},
	Mrnumber = {3150175},
	Mrreviewer = {Deryk Osthus},
	Number = {1},
	Pages = {67--76},
	Title = {Tight codegree condition for the existence of loose {H}amilton cycles in 3-graphs},
	Url = {http://dx.doi.org/10.1137/120890417},
	Volume = {28},
	Year = {2014},
	Bdsk-Url-1 = {http://dx.doi.org/10.1137/120890417}}


\bib{Di52}{article}{
	author = {Dirac, G. A.},
	title = {Some Theorems on Abstract Graphs},
	journal = {Proceedings of the London Mathematical Society},
	volume = {s3-2},
	number = {1},
	publisher = {Oxford University Press},
	issn = {1460-244X},
	url = {http://dx.doi.org/10.1112/plms/s3-2.1.69},
	doi = {10.1112/plms/s3-2.1.69},
	pages = {69--81},
	year = {1952},
}

\bib{DuFr2}{article}{
	AUTHOR = {Dudek, Andrzej},
	author={Frieze, Alan},
	TITLE = {Loose {H}amilton cycles in random uniform hypergraphs},
	JOURNAL = {Electron. J. Combin.},
	FJOURNAL = {Electronic Journal of Combinatorics},
	VOLUME = {18},
	YEAR = {2011},
	NUMBER = {1},
	PAGES = {Paper 48, 14},
	ISSN = {1077-8926},
	MRCLASS = {05C80 (05C45 05C65)},
	MRNUMBER = {2776824},
}

\bib{DuFr1}{article}{
	AUTHOR = {Dudek, Andrzej},
	author={Frieze, Alan},
	TITLE = {Tight {H}amilton cycles in random uniform hypergraphs},
	JOURNAL = {Random Structures \& Algorithms},
	FJOURNAL = {Random Structures \& Algorithms},
	VOLUME = {42},
	YEAR = {2013},
	NUMBER = {3},
	PAGES = {374--385},
	ISSN = {1042-9832},
	MRCLASS = {05C80 (05C45 05C65)},
	MRNUMBER = {3039684},
	MRREVIEWER = {Andrew Clark Treglown},
	DOI = {10.1002/rsa.20404},
	URL = {http://dx.doi.org/10.1002/rsa.20404},
}


\bib{GPW}{article}{
	Author = {Glebov, R.},
	author={Person, Y.},
	author={Weps, W.},
	Date-Added = {2017-02-14 19:33:33 +0000},
	Date-Modified = {2017-02-14 19:33:33 +0000},
	Doi = {10.1016/j.ejc.2011.10.003},
	Fjournal = {European Journal of Combinatorics},
	Issn = {0195-6698},
	Journal = {European J. Combin.},
	Mrclass = {05C35 (05C45 05C65)},
	Mrnumber = {2864440},
	Mrreviewer = {Martin Sonntag},
	Number = {4},
	Pages = {544--555},
	Title = {On extremal hypergraphs for {H}amiltonian cycles},
	Url = {http://dx.doi.org/10.1016/j.ejc.2011.10.003},
	Volume = {33},
	Year = {2012},
	Bdsk-Url-1 = {http://dx.doi.org/10.1016/j.ejc.2011.10.003}}


\bib{HZ2}{article}{
	Author = {Han, J.},
	author={Zhao, Y.},
	Date-Added = {2017-02-14 19:33:21 +0000},
	Date-Modified = {2017-02-14 19:33:21 +0000},
	Doi = {http://dx.doi.org/10.1016/j.jcta.2015.01.004},
	Issn = {0097-3165},
	Journal = {J. Combin. Theory Ser. A},
	Keywords = {Regularity lemma},
	Number = {0},
	Pages = {194 - 223},
	Title = {Minimum codegree threshold for Hamilton $\ell$-cycles in k-uniform hypergraphs},
	Url = {http://www.sciencedirect.com/science/article/pii/S0097316515000059},
	Volume = {132},
	Year = {2015},
	Bdsk-Url-1 = {http://www.sciencedirect.com/science/article/pii/S0097316515000059},
	Bdsk-Url-2 = {http://dx.doi.org/10.1016/j.jcta.2015.01.004}}	

\bib{HZ1}{article}{
	Author = {Han, J.},
	author={Zhao, Y.},
	Date-Added = {2017-02-14 19:33:21 +0000},
	Date-Modified = {2017-02-14 19:33:21 +0000},
	Journal = {J. Combin. Theory Ser. B},
	Pages = {70 - 96},
	Title = {Minimum degree thresholds for loose {Hamilton} cycle in 3-graphs},
	Volume = {114},
	Year = {2015}}	
	

\bib{HS}{article}{
	Author = {H\`an, H.},
	author= {Schacht, M.},
	Date-Added = {2017-02-14 19:33:21 +0000},
	Date-Modified = {2017-02-14 19:33:21 +0000},
	Issue = {3},
	Journal = {J. Combin. Theory Ser. B},
	Pages = {332--346},
	Title = {Dirac-type results for loose {Hamilton} cycles in uniform hypergraphs},
	Volume = {100},
	Year = {2010}}

\bib{JLR}{book}{
	AUTHOR = {Janson, Svante},
	author={\L uczak, Tomasz},
	author={Rucinski, Andrzej},
	TITLE = {Random graphs},
	SERIES = {Wiley-Interscience Series in Discrete Mathematics and
		Optimization},
	PUBLISHER = {Wiley-Interscience, New York},
	YEAR = {2000},
	PAGES = {xii+333},
	ISBN = {0-471-17541-2},
	MRCLASS = {05C80 (60C05 82B41)},
	MRNUMBER = {1782847},
	MRREVIEWER = {Mark R. Jerrum},
	DOI = {10.1002/9781118032718},
	URL = {http://dx.doi.org/10.1002/9781118032718},
}

\bib{Karp}{article}{
	author = {Karp, Richard M.},
	TITLE = {Reducibility among combinatorial problems},
	BOOKTITLE = {Complexity of computer computations ({P}roc. {S}ympos., {IBM}
		{T}homas {J}. {W}atson {R}es. {C}enter, {Y}orktown {H}eights,
		{N}.{Y}., 1972)},
	PAGES = {85--103},
	PUBLISHER = {Plenum, New York},
	YEAR = {1972},
	MRCLASS = {68A20},
	MRNUMBER = {0378476},
	MRREVIEWER = {John T. Gill},
}

\bib{KKMO}{article}{
	Author = {Keevash, P.},
	author={K\"uhn, D.},
	author= {Mycroft, R.},
	author= {Osthus, D.},
	Date-Added = {2017-02-14 19:33:21 +0000},
	Date-Modified = {2017-02-14 19:33:21 +0000},
	Journal = {Discrete Math.},
	Number = {7},
	Pages = {544--559},
	Title = {Loose {Hamilton} cycles in hypergraphs},
	Volume = {311},
	Year = {2011}}

\bib{Korshunov} {article}{
	AUTHOR = {Kor\v sunov, A. D.},
	TITLE = {Solution of a problem of {P}. {E}rd\H os and {A}. {R}\'enyi on
		{H}amiltonian cycles in nonoriented graphs},
	JOURNAL = {Diskret. Analiz},
	NUMBER = {31 Metody Diskret. Anal. v Teorii Upravljaju\v s\v cih Sistem},
	YEAR = {1977},
	PAGES = {17--56, 90},
	MRCLASS = {05C35},
	MRNUMBER = {0543833},
}

\bib{KKS}{article}{
	AUTHOR = {Krivelevich, Michael},
	author={Kwan, Matthew},
	author={Sudakov, Benny},
	TITLE = {Cycles and matchings in randomly perturbed digraphs and
		hypergraphs},
	JOURNAL = {Combin. Probab. Comput.},
	FJOURNAL = {Combinatorics, Probability and Computing},
	VOLUME = {25},
	YEAR = {2016},
	NUMBER = {6},
	PAGES = {909--927},
	ISSN = {0963-5483},
	MRCLASS = {05C80 (05C35 05C65)},
	MRNUMBER = {3568952},
	DOI = {10.1017/S0963548316000079},
	URL = {http://dx.doi.org/10.1017/S0963548316000079},
}

\bib{KKS2}{article}{
AUTHOR = {Krivelevich, Michael},
	author={Kwan, Matthew},
	author={Sudakov, Benny},
  title={Bounded-degree spanning trees in randomly perturbed graphs},
  journal={SIAM Journal on Discrete Mathematics},
  volume={31},
  number={1},
  pages={155--171},
  year={2017},
  publisher={SIAM}
}

\bib{KMO}{article}{
	author={K\"uhn, D.},
	author= {Mycroft, R.},
	author= {Osthus, D.},
	Date-Added = {2017-02-14 19:33:21 +0000},
	Date-Modified = {2017-02-14 19:33:21 +0000},
	Journal = {J. Combin. Theory Ser. A},
	Number = {7},
	Pages = {910--927},
	Title = {Hamilton $\ell$-cycles in uniform hypergraphs},
	Volume = {117},
	Year = {2010}}

\bib{KO}{article}{
	Author = {K\"uhn, D.},
	author= {Osthus, D.},
	Date-Added = {2017-02-14 19:33:21 +0000},
	Date-Modified = {2017-02-14 19:33:21 +0000},
	Journal = {J. Combin. Theory Ser. B},
	Number = {6},
	Pages = {767--821},
	Title = {Loose {Hamilton} cycles in 3-uniform hypergraphs of high minimum degree},
	Volume = {96},
	Year = {2006}}

\bib{McMy}{article}{
	Author = {McDowell, A.},
	author={Mycroft, R.},
	Date-Added = {2017-02-14 19:33:21 +0000},
	Date-Modified = {2017-02-14 19:33:21 +0000},
	JOURNAL = {Electron. J. Combin.},
	FJOURNAL = {Electronic Journal of Combinatorics},
	VOLUME = {25},
	YEAR = {2018},
	PAGES = {P4.36},
	ISSN = {1077-8926},
	Title = {Hamilton {$\ell$}-cycles in randomly perturbed hypergraphs}}

\bib{Posa}{article}{
  AUTHOR = {P\'osa, L.},
TITLE = {Hamiltonian circuits in random graphs},
JOURNAL = {Discrete Math.},
FJOURNAL = {Discrete Mathematics},
VOLUME = {14},
YEAR = {1976},
NUMBER = {4},
PAGES = {359--364},
ISSN = {0012-365X},
MRCLASS = {05C35},
MRNUMBER = {0389666},
MRREVIEWER = {F. Harary},
DOI = {10.1016/0012-365X(76)90068-6},
URL = {http://dx.doi.org/10.1016/0012-365X(76)90068-6},
}

\bib{RRRSS}{article}{
       author = {{Reiher}, Christian},
       author={{R{\"o}dl}, Vojt{\v{e}}ch},
       author={Ruci{\'n}ski, Andrzej},
       author={{Schacht}, Mathias},
       author={ {Szemer{\'e}di}, Endre},
        title = {Minimum vertex degree condition for tight Hamiltonian cycles in 3-uniform hypergraphs},
      journal = {Proceedings of the London Mathematical Society, to appear}
      }
      
\bib{RR}{book} {
    AUTHOR = {R\"{o}dl, V.},
    author={Ruci\'{n}ski, Andrzej},
     TITLE = {Dirac-type questions for hypergraphs---a survey (or more
              problems for {E}ndre to solve)},
 BOOKTITLE = {An irregular mind},
    SERIES = {Bolyai Soc. Math. Stud.},
    VOLUME = {21},
     PAGES = {561--590},
 PUBLISHER = {J\'{a}nos Bolyai Math. Soc., Budapest},
      YEAR = {2010},
   MRCLASS = {05-02 (05C65 05C70)},
  MRNUMBER = {2815614},
       DOI = {10.1007/978-3-642-14444-8_16},
       URL = {https://doi.org/10.1007/978-3-642-14444-8_16},
}

\bib{RoRu14}{article}{
	Author = {R{\"o}dl, V.},
	author={Ruci{\'n}ski, A.},
	Date-Added = {2017-02-14 19:33:33 +0000},
	Date-Modified = {2017-02-14 19:33:33 +0000},
	Doi = {10.7151/dmgt.1743},
	Fjournal = {Discussiones Mathematicae. Graph Theory},
	Issn = {1234-3099},
	Journal = {Discuss. Math. Graph Theory},
	Mrclass = {05D05 (05C65)},
	Mrnumber = {3194042},
	Mrreviewer = {Peter James Dukes},
	Number = {2},
	Pages = {361--381},
	Title = {Families of triples with high minimum degree are {H}amiltonian},
	Url = {http://dx.doi.org/10.7151/dmgt.1743},
	Volume = {34},
	Year = {2014},
	Bdsk-Url-1 = {http://dx.doi.org/10.7151/dmgt.1743}}


\bib{RoRuSz06}{article}{
	Author = {R\"odl, V.},
author={Ruci\'nski, A.},
author={Szemer\'edi, E.},
	title={A Dirac-type theorem for 3-uniform hypergraphs},
	journal={Combin. Probab. Comput.},
	volume={15},
	date={2006},
	number={1-2},
	pages={229--251},
	issn={0963-5483},
	review={\MR{2195584}},
	doi={10.1017/S0963548305007042},
}

\bib{RRS08}{article}{
	Author = {R\"odl, V.},
author={Ruci\'nski, A.},
author={Szemer\'edi, E.},
	Date-Added = {2017-02-14 19:33:33 +0000},
	Date-Modified = {2017-02-14 19:33:33 +0000},
	Journal = {Combinatorica},
	Number = {2},
	Pages = {229--260},
	Title = {An approximate {D}irac-type theorem for k-uniform hypergraphs},
	Volume = {28},
	Year = {2008}}

\bib{RRS11}{article}{
		Author = {R\"odl, V.},
	author={Ruci\'nski, A.},
	author={Szemer\'edi, E.},
	Date-Added = {2017-02-14 19:33:33 +0000},
	Date-Modified = {2017-02-14 19:33:33 +0000},
	Journal = {Advances in Mathematics},
	Number = {3},
	Pages = {1225--1299},
	Title = {Dirac-type conditions for {Hamiltonian} paths and cycles in 3-uniform hypergraphs},
	Volume = {227},
	Year = {2011}}
	

\bib{SpTe}{article}{
AUTHOR = {Spielman, Daniel A.},
author= {Teng, Shang-Hua},
TITLE = {Smoothed analysis: motivation and discrete models},
BOOKTITLE = {Algorithms and data structures},
SERIES = {Lecture Notes in Comput. Sci.},
VOLUME = {2748},
PAGES = {256--270},
PUBLISHER = {Springer, Berlin},
YEAR = {2003},
MRCLASS = {68W40 (05C85)},
MRNUMBER = {2078601},
}

\bib{zsurvey}{book}{
    AUTHOR = {Zhao, Yi},
     TITLE = {Recent advances on {D}irac-type problems for hypergraphs},
 BOOKTITLE = {Recent trends in combinatorics},
    SERIES = {IMA Vol. Math. Appl.},
    VOLUME = {159},
     PAGES = {145--165},
 PUBLISHER = {Springer, [Cham]},
      YEAR = {2016},
   MRCLASS = {05-02 (05C35 05C45 05C65 05C70)},
  MRNUMBER = {3526407},
       DOI = {10.1007/978-3-319-24298-9_6},
       URL = {https://doi.org/10.1007/978-3-319-24298-9_6},
}

\end{biblist}
\end{bibdiv}


\noindent
\end{document}